\pdfoutput=1
\documentclass{shinyart}
\usepackage[utf8]{inputenc}
\usepackage{shinybib}
\usepackage{autonum}

\usepackage[exponent-product=\cdot]{siunitx}
\usepackage{booktabs}
\usepackage{enumerate}

\newcommand{\calE}{\mathcal{E}}
\newcommand{\N}{\mathbb{N}}
\newcommand{\R}{\mathbb{R}}
\newcommand{\wkto}{\rightharpoonup}

\newcommand{\norm}[1]{\| #1 \|}
\newcommand{\set}[2]{\left\{#1:#2\right\}}
\DeclareMathOperator{\Id}{\mathrm{Id}}
\DeclareMathOperator{\proj}{\mathrm{proj}}
\DeclareMathOperator{\prox}{\mathrm{prox}}
\def\Vad{V_{\text{ad}}}

\usepackage{tikz}
\usepackage{pgfplots}
\pgfplotsset{compat=newest}
\pgfplotsset{plot coordinates/math parser=false}
\usepgfplotslibrary{fillbetween}
\usetikzlibrary{patterns}

%====================================================================

\title{L$\scriptstyle ^1$ penalization of volumetric dose objectives in optimal control of PDEs%
    \protect\NoCaseChange{\protect\footnote{This manuscript has been authored by UT-Battelle, LLC, under Contract No. DE-AC0500OR22725 with the U.S. Department of Energy. The United States Government retains and the publisher, by accepting the article for publication, acknowledges that the United States Government retains a non-exclusive, paid-up, irrevocable, world-wide license to publish or reproduce the published form of this manuscript, or allow others to do so, for the United States Government purposes. The Department of Energy will provide public access to these results of federally sponsored research in accordance with the DOE Public Access Plan (\url{http://energy.gov/downloads/doe-public-access-plan}).}}
}
\author{%
    Richard C. Barnard\thanks{Oak Ridge National Laboratory, Oak Ridge, TN 37831, USA (\email{barnardrc@ornl.gov})} \and
    Christian Clason\thanks{Faculty of Mathematics, University Duisburg-Essen, 45117 Essen, Germany (\email{christian.clason@uni-due.de})}\and%
}

\hypersetup{
    pdftitle={L¹ penalization of volumetric dose objectives in optimal control of PDEs},
    pdfauthor={Richard C. Barnard, Christian Clason},
    pdfkeywords={optimal control, L¹ penalization, dose volume constraints, semi-smooth Newton}
}

\addbibresource{dvhpenalty.bib}

\begin{document}

\maketitle

\begin{abstract}
    This work is concerned with a class of pde-constrained optimization problems that are motivated by an application in radiotherapy treatment planning. Here the primary design objective is to minimize the volume where a functional of the state violates a prescribed level, but prescribing these levels in the form of pointwise state constraints leads to infeasible problems. We therefore propose an alternative approach based on $L^1$ penalization of the violation that is also applicable when state constraints are infeasible.  We establish well-posedness of the corresponding optimal control problem, derive first-order optimality conditions, discuss convergence of minimizers as the penalty parameter tends to infinity, and present a semismooth Newton method for their efficient numerical solution.  The performance of this method for a model problem is illustrated and contrasted with an alternative approach based on (regularized) state constraints.
\end{abstract}

\section{Introduction}\label{sec:intro}
We consider optimal control problems governed by time-dependent linear partial differential equations in which the region where the state (or a state-dependent quantity of interest) is greater than or less than a prescribed level is to be minimized.
Such problems arise in radiotherapy treatment planning, where the aim is to deposit a radiative dose that is sufficiently strong to destroy tumor tissue while also minimizing damage to nearby healthy organs and structures.
Specifically, on the target region (the tumor), we wish the \emph{accumulated} output of the system to exceed a prescribed level $U$, while on the risk region (the healthy organs), we wish the accumulated output to not exceed a prescribed level $L$.
Due to the usual close proximity of tumors and healthy organs, it is usually not possible to satisfy these constraints over the whole region. Since a successful therapy only requires destroying (i.e., depositing a dose exceeding $U$) a sufficiently large part of the tumor, and healthy organs remain viable if a sufficiently large part remains undamaged (i.e., has a dose below $L$), volumetric conditions are used in evaluating structure survival probabilities. These are usually given in the form of dose volume histograms (DVH); see, e.g., \cite{ICRU93}.

Such performance criteria would be well-captured by an $L^0$ penalty on the violation of the prescribed limits.  However, in light of well-established difficulties associated with $L^0$ minimization, we instead propose to use $L^1$ penalty terms.
Specifically, let $\Omega\subset\R^n$ be given and let  $\omega_T,\omega_R\subset \Omega$ open, bounded, and disjoint be a target region and a risk region, respectively.
Let $\calE(y,u)=0$ denote the (time-dependent) partial differential equation with $\calE:Y\times V\rightarrow W$ for suitable Hilbert spaces $W,V,Y$, and let $C_\omega:L^2(0,T;L^1(\omega))\to L^1(\omega)$ (for either $\omega=\omega_R$ or $\omega=\omega_T$) denote the integral operator $y(t,x)\mapsto \int_0^T \chi_{\omega}(x)y(t,x)\,dt$.
For the sake of generality, we also include a quadratic tracking term with respect to a desired state $z\in L^2(Q):=L^2(0,T;L^2(\Omega))$.
We then consider problems of the form
\begin{equation}\label{eq:problem}
    \left\{\begin{aligned}
            \min_{u\in \Vad,~y\in Y} &\frac{1}{2}\norm{u}^2_{V}+\frac{\alpha}{2}\norm{y-z}^2_{L^2(Q)}+\beta_1\norm{(C_{\omega_T}y-U)^-}_{L^1(\omega_T)}+\beta_2\norm{(C_{\omega_R}y-L)^+}_{L^1(\omega_R)}\\
            \text{s.t.}\quad &\calE(y,u)=0,
    \end{aligned}\right.
\end{equation}
where $\alpha\geq 0$ and $\beta_1,\beta_2>0$, $(u)^+ = \max\{0,u\}$ and $(u)^-=\min\{0,u\}$ pointwise almost everywhere, $0<L<U$, and $\Vad\subset V$ is a set of admissible controls to be specified below.
(Spatially varying levels $U$ and $L$ are possible as well.) We call the addition of these $L^1$-penalty terms a \emph{volumetric dose penalization}, in keeping with the motivational problem from radiotherapy treatment planning. 
However, the subsequent analysis holds for more general linear PDEs. 
We also note that the analysis in the following sections can be extended to problems where $C_\omega$ takes the form of some other bounded linear functional.

As an alternative approach, one could attempt to achieve the design objectives listed above by way of pointwise state constraints on the target and risk region, i.e., by considering
\begin{equation} \label{eq:boxproblem}
    \left\{\begin{aligned} 
            \min_{u\in \Vad,~y\in Y} &\frac{1}{2}\norm{u}^2_{V}+\frac{\alpha}{2}\norm{y-z}^2_{L^2(Q)}\\
            \text{s.t.}\quad &\calE(y,u)=0,\\
                             &C_{\omega_T}y\geq U\quad\text{a.e. in } \omega_T,\\
                             &C_{\omega_R}y\leq L\quad\text{a.e. in } \omega_R.
    \end{aligned}\right.
\end{equation}
However, since the observed dose $C_\omega y$ is continuous and $L<U$, this problem is not well-posed due to the absence of feasible points if $\omega_T$ and $\omega_R$ are not separated by a strictly positive distance; as tumors can (and frequently will) occur inside vital organs, this separation does not hold in practice. (We point out that any conforming discretization of the problem will also have no feasible points.)
For instance, even in simple academic problems as in \cite{BarFraHer12}, deploying a sufficient radiative dose on the tumor is impossible unless high levels of dose are also placed on at least some portion the healthy tissue.  This becomes even more clear when additional constraints on the control are included, such as requiring the control to be a beam of a certain shape or direction.
On the other hand, if we modify the problem so that the sets are open and disjoint, the reduced cost functional for the continuous problem will not be weakly lower semi-continuous.
If one takes a cavalier approach and attempts to numerically solve the problem using, e.g., the method in \cite{ItoKun03}, one will tend to run into a numerical locking as seen in the examples below.
In contrast, \eqref{eq:problem} suffers from no such difficulty in these situations; while of course we can not expect a solution which is feasible for \eqref{eq:boxproblem} (if such a solution even exists), the theoretical existence and uniqueness of solutions is still assured and we shall see that the numerical performance is still reasonable.
In particular, we stress that \eqref{eq:problem} should not be interpreted as an exact penalization of \eqref{eq:boxproblem}.

Let us briefly comment on related literature.
A fully discretized formulation of the radiotherapy treatment planning problem as a convex linear-quadratic program was studied in \cite{SheFerOli99}.  
A treatment strategy using convexified DVH constraints was considered in \cite{ZarShaTri13}.  
However in these two works, the physics of dose deposition are discretized using precomputed beamlets; this simplification gives significant errors in dose calculation \cite{TreBog07}.
Regarding radiotherapy planning and its formulation as a PDE-constrained optimization problem, we refer to, e.g., \cite{BarFraHer12,FraHerHin12,FraHerSan10}, which use physically accurate models but do not treat DVH-based optimization strategies. 
Additionally, such models involve a significant increase in computational costs, meaning efficient optimization methods in the context of PDE-constrained problems are needed. 
Regarding $L^1$-minimization, its application to partial differential equations was first considered in the context of sparse control; see, e.g., \cite{Stadler:2007a,HerStaWac12}. $L^1$ penalization of other constraints in optimal control of PDEs was treated in \cite{Clason:2013,Clason:2014,ClaRunKunBar15}. In \cite{Gugat:2010}, an algorithm was developed which treats state-constrained problems, including \eqref{eq:boxproblem}, via a sequence of smoothed penalizations.  However, we note that here we are motivated by problems where \eqref{eq:boxproblem} does not have feasible solutions, for which such smoothing methods are not directly applicable.

This work is organized as follows. In \cref{sec:optcon}, we establish the well-posedness of \eqref{eq:problem}, derive necessary optimality conditions, and discuss the convergence of minimizers as $\beta_1,\beta_2\to\infty$. We then turn in \cref{sec:numsol} to the issue of the numerical solutions of \eqref{eq:problem} via Moreau--Yosida regularization, which allows for the use of a superlinearly convergent semismooth Newton method.  Numerical examples illustrating the behavior of the proposed approach are presented in \cref{sec:examples}.

\section{Existence and optimality conditions}\label{sec:optcon}

We first formulate \eqref{eq:problem} in reduced form. Let $V:=L^2(0,T;L^2(\omega_C))$,
\begin{equation}
    \Vad \coloneqq \set{u\in L^2(0,T;L^2(\omega_C))}{U_{\min} \leq u(t,x) \leq U_{\max} \text{ for a.e. } x\in \omega_C, t\in [0,T]}
\end{equation}
denote the admissible control set and set $Y:=L^2(0,T;L^2(\Omega))$.
We assume that for every $u\in V$, the PDE $\calE(y,u)=0$ admits a unique solutions $y\in Y$,
meaning that we can introduce a control-to-state operator 
\begin{equation}
    S:\Vad \to Y,\qquad u\mapsto y \quad\text{ solving }\quad \calE(y,u)=0.
\end{equation}
We make the assumption that $S$ is affine and bounded from $V$ to $L^2(Q)$. We note that since $S$ is affine, its Fréchet derivative $S' \eqqcolon S_0$ is given by the solution of \eqref{eq:state} with homogeneous initial and boundary conditions. 

We can thus formulate the reduced problem
\begin{equation}\label{eq:problem_red}
    \min_{u\in \Vad} \frac{1}{2}\norm{u}^2_{V}+\frac{\alpha}{2}\norm{Su-z}^2_{L^2(Q)}+\beta_1\norm{(C_{\omega_T}Su-U)^-}_{L^1(\omega_T)}+\beta_2\norm{(C_{\omega_R}Su-L)^+}_{L^1(\omega_R)}.
    \tag{$\mathcal{P}$}
\end{equation}
The final two terms take the form of integrals of convex and Lipschitz continuous integrands $g^+,g^-:\R\to \R$ with
\begin{equation}
g^+(v):= |(v-L)^+| = \begin{cases} 0 & v\leq L, \\ v-L &v\geq L,\end{cases}\qquad
g^-(v):= |(v-U)^-| = \begin{cases} U-v & v\leq U, \\ 0 &v\geq U.\end{cases}
\end{equation}
Since the bounded operators $C$ and $S$ are, respectively, linear and affine, the cost function is the sum of convex and weakly lower semi-continuous functionals, and we obtain existence of an optimal control by Tonelli's direct method. Due to the strictly convex control cost term, the optimal control is unique.
\begin{theorem}\label{thm:existence}
    For any $\alpha\geq0$ and $\beta_1,\beta_2>0$, there exists a unique minimizer $\bar u\in \Vad$ to \eqref{eq:problem_red}.
\end{theorem}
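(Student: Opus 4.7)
The plan is to follow the hint already sketched in the paragraph preceding the statement and apply Tonelli's direct method of the calculus of variations. The strategy has three ingredients: weak sequential compactness of $\Vad$, weak lower semicontinuity of the reduced objective $J$, and strict convexity of the quadratic control term.

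My first step is to observe that $\Vad$ is nonempty (any constant function with value in $[U_{\min},U_{\max}]$ belongs to it), convex, and bounded in $V=L^2(0,T;L^2(\omega_C))$, and that it is closed in the norm topology since the pointwise a.e.\ bounds pass to strong limits along a.e.\ convergent subsequences. Hence $\Vad$ is weakly sequentially compact, and for any minimizing sequence $(u_n)\subset\Vad$ I can extract a subsequence (not relabeled) with $u_n\wkto \bar u\in\Vad$.

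Next I verify weak lower semicontinuity of each summand of $J$. The term $\tfrac12\norm{u}_V^2$ is w.l.s.c.\ as the square of a Hilbert-space norm. Since $S$ is affine and bounded from $V$ into $L^2(Q)$, $Su_n\wkto S\bar u$ in $L^2(Q)$, and composition with the convex continuous functional $\tfrac\alpha2\norm{\cdot-z}_{L^2(Q)}^2$ preserves w.l.s.c. For the two penalty terms, the key observation is that the maps $C_{\omega_T}S$ and $C_{\omega_R}S$ are affine continuous from $V$ into $L^1(\omega_T)$ and $L^1(\omega_R)$, respectively (using $L^2(\omega)\hookrightarrow L^1(\omega)$ for bounded $\omega$ together with Fubini to see $C_\omega$ as a bounded linear operator into $L^1(\omega)$). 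Composition with the superposition operators induced by the convex Lipschitz integrands $g^\pm$ yields functionals on $V$ that are convex and norm-continuous, hence weakly lower semicontinuous. Combining these gives $J(\bar u)\le\liminf_n J(u_n)=\inf_{\Vad}J$, so $\bar u$ is a minimizer.

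For uniqueness I note that $u\mapsto\tfrac12\norm{u}_V^2$ is strictly convex on the convex set $\Vad$, while each of the remaining terms is convex in $u$ (a convex integrand composed with the affine map $S$ or $C_\omega S$). Thus $J$ is strictly convex on $\Vad$ and the minimizer is unique. I do not expect any substantial obstacle here: the proof is a textbook application of the direct method, and the only point that requires some care is the continuity of the $L^1$-valued composition $C_\omega S$, which follows from the boundedness of $\omega$ and the mapping property of $S$ assumed in the reduced formulation.
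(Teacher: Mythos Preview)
Your proof is correct and follows exactly the approach sketched in the paper: Tonelli's direct method using convexity and weak lower semicontinuity of each summand, with uniqueness coming from the strictly convex control cost. One small remark: you invoke boundedness of $\Vad$ to obtain weak sequential compactness, but the paper does not assume this here (note that \cref{thm:opt_limit} later adds ``Assume that $\Vad$ is bounded'' as an explicit hypothesis); the cleaner justification is that $J(u)\geq \tfrac12\norm{u}_V^2$ is coercive, so any minimizing sequence is automatically bounded in $V$ regardless of whether $U_{\min},U_{\max}$ are finite.
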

\bigskip

To derive optimality conditions, we apply the sum and chain rules of convex analysis. We first compute the subdifferentials of the volumetric dose penalty terms via the subdifferentials of the corresponding integrands $g^+,g^-$. Since both functions can be written as the maximum of two convex and differentiable functions, their convex subdifferential is given pointwise by the convex hull of the derivatives of the active functions (see, e.g., \cite[Corollary 4.3.2]{Hiriart:2001}), i.e.,
\begin{equation}\label{eq:subdiff}
\partial g^+(v)=  \begin{cases} \{0\} & v< L, \\ \{1\} &v> L, \\ [0,1] & v = L, \end{cases}\qquad\qquad
\partial g^-(v)=  \begin{cases} \{-1\} & v< U, \\ \{0\} &v> U, \\ [-1,0] & v = U. \end{cases}
\end{equation}
We also introduce the indicator function $\delta_{\Vad}: L^2(0,T;L^2(\omega_C))\to\overline \R$ in the sense of convex analysis, i.e., $\delta_{\Vad}(u) = 0$ if $u\in \Vad$ and  $\delta_{\Vad}(u) = \infty$ else. Finally, set
\begin{equation}
    G^{+}:L^2(\omega_R)\to\R,\qquad y\mapsto \int_{\omega_R} g^+(y(x))\,dx
\end{equation}
and similarly for $G^-:L^2(\omega_T)\to\R$.
We then obtain the following optimality conditions.
\begin{theorem}\label{thm:optcond}
    Let $\bar u\in\Vad$ be a minimizer of \eqref{eq:problem_red}. Then there exist $\bar \mu^+\in L^\infty(\omega_R)$ and $\bar \mu^-\in L^\infty(\omega_T)$ such that
    \begin{equation}
        \label{eq:optsys}
        \left\{\begin{aligned}
                \bar u &= \proj_{\Vad}\left(-S_0^*\left(\alpha (S\bar u -z) + \beta_1C_{\omega_R}^*\bar \mu^+ + \beta_2C^*_{\omega_T}\bar \mu^-\right)\right),\\
                \bar \mu^+ (x) &\in \partial g^+([C_{\omega_R}S\bar u](x))\quad\text{for a.e. }x\in\omega_R,\\
                \bar \mu^- (x) &\in \partial g^-([C_{\omega_T}S\bar u](x))\quad\text{for a.e. }x\in\omega_T.
        \end{aligned}\right.
        \tag{OS}
    \end{equation}
\end{theorem}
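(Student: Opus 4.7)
My approach is standard convex calculus: the reduced objective together with the indicator $\delta_{\Vad}$ assembles into a proper convex lower semicontinuous functional $F$ on $V$, and Fermat's rule characterizes $\bar u$ as a minimizer iff $0\in\partial F(\bar u)$. The plan is to apply the sum rule, compute each individual subdifferential, and then rewrite the resulting inclusion as the projection formula of the theorem statement. The four summands apart from $\delta_{\Vad}$ are all real-valued and continuous on $V$: the two quadratic terms trivially, and the two penalty terms because the Lipschitz continuity of $g^\pm$ (visible from the pointwise definitions in \eqref{eq:subdiff}) makes $G^\pm$ Lipschitz on $L^2$, while $C_\omega\circ S:V\to L^2$ is continuous and affine by the assumptions of \cref{sec:optcon}. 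Global continuity of these four summands guarantees that the convex sum rule applies without any additional constraint qualification.

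For the smooth terms, the Fréchet derivatives are $\bar u$ and $\alpha S_0^*(S\bar u-z)$, using the identification $S'=S_0$ from the excerpt. For the two penalty compositions, the chain rule of convex analysis applied to the bounded affine map $C_\omega\circ S$ yields $\partial(G^\pm\circ C_\omega\circ S)(\bar u) = S_0^*C_\omega^*\,\partial G^\pm(C_\omega S\bar u)$, with the qualification trivial since $G^\pm$ is finite on all of $L^2$. Rockafellar's theorem on subdifferentials of integral functionals then identifies elements of $\partial G^+(C_{\omega_R}S\bar u)$ with those $\bar\mu^+\in L^2(\omega_R)$ satisfying $\bar\mu^+(x)\in\partial g^+([C_{\omega_R}S\bar u](x))$ pointwise a.e., and analogously for $\bar\mu^-$. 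Since \eqref{eq:subdiff} shows $\partial g^+\subseteq[0,1]$ and $\partial g^-\subseteq[-1,0]$, every measurable selection automatically lies in $L^\infty$, which delivers the asserted regularity of the multipliers.

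The last summand contributes the normal cone $\partial\delta_{\Vad}(\bar u) = N_{\Vad}(\bar u)$, and the well-known Hilbert space equivalence $v\in N_{\Vad}(\bar u) \Leftrightarrow \bar u = \proj_{\Vad}(\bar u+v)$ converts the resulting first-order inclusion into the projection identity in \eqref{eq:optsys}. Most of the work is really bookkeeping; the only potential subtlety is verifying that each calculus rule applies in the infinite-dimensional setting, but the global continuity and finiteness of all non-indicator summands make the relevant constraint qualifications automatic, so no assumption on the geometry of $\Vad$ beyond convexity and closedness is required.
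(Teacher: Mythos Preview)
Your proposal is correct and follows essentially the same route as the paper: apply the convex sum and chain rules (justified by continuity of the non-indicator summands), identify $\partial G^\pm$ pointwise via the integral-functional subdifferential theorem, read off the $L^\infty$ regularity of $\bar\mu^\pm$ from the boundedness of $\partial g^\pm$ in \eqref{eq:subdiff}, and rewrite the resulting normal-cone inclusion as the projection identity. The only difference is cosmetic---you spell out the constraint-qualification argument a bit more explicitly than the paper does.
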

\begin{proof}
    Since \eqref{eq:problem_red} is convex, $S$ and $C_\omega$ are continuous, and all terms apart from the indicator function are finite-valued, the  sum and chain rules of convex analysis (see, e.g., \cite[Prop.~I.5.6, Prop.~I.5.7]{Ekeland:1999a}) yield the necessary optimality conditions
    \begin{equation}
        0\in \{\bar u\} + \{\alpha S_0^*(S\bar u -z)\} + \beta_1 C_{\omega_T}^*S_0^*\partial G^-(C_{\omega_T}S\bar u) + \beta_2 C_{\omega_R}^*S_0^*\partial G^+(C_{\omega_R}S\bar u) + \partial\delta_{\Vad}(\bar u).
    \end{equation}
    The fact that the subdifferential of the convex integral functional $G^+$ and $G^-$ can be computed pointwise (see, e.g. \cite[Prop.~16.50]{Bauschke:2011}) yields the second and third relation of \eqref{eq:optsys}, which also imply together with \eqref{eq:subdiff} the claimed boundedness of $\bar \mu^+$ and $\bar \mu^-$.

    Rearranging the remaining terms yields
    \begin{equation}
        -\bar u -S_0^*\left(\alpha (S\bar u -z) + \beta_1C^*_{\omega_R}\bar \mu^+ + \beta_2C^*_{\omega_T}\bar \mu^-\right) \in \partial \delta_{\Vad}(\bar u),
    \end{equation}
    which can be reformulated (denoting the second term on the left hand side by $\bar p$ for brevity)
    \begin{equation}
        \begin{aligned}
            -\bar u + \bar p \in  \partial \delta_{\Vad}(\bar u) &\Leftrightarrow
            \bar p \in \{\bar u\} +  \partial \delta_{\Vad}(\bar u) \\
            &\Leftrightarrow
            \bar u \in (\Id + \partial \delta_{\Vad})^{-1}(\bar p) = \proj_{\Vad}(\bar p),
        \end{aligned}
    \end{equation}
    using the fact that the proximal mapping of an indicator function of a convex set coincides with the (single-valued) metric projection onto this set; cf., e.g., \cite[Ex.~12.25]{Bauschke:2011}. This gives the first relation of \eqref{eq:optsys}.
\end{proof}

\bigskip

We finally address the convergence $\beta\to\infty$. Note that we do not assume the existence of a feasible solution to the state equation, which complicates the analysis and requires assuming control constraints and complete continuity of $S$, i.e., that $u_n\rightharpoonup u$ in $V$ implies $Su_n \to Su$ in $L^2(Q)$; this is in particular the case if the range of $S$ embeds compactly into $L^2(Q)$.
To simplify the presentation, we assume in the following that $\beta_2 = c\beta_1=:c\beta$ for some $c>0$.
\begin{theorem}\label{thm:opt_limit}
    Assume that $\Vad$ is bounded and that $S:V\to L^2(Q)$ is completely continuous. Then for $\beta\to \infty$, the family $\{u_\beta\}_{\beta>0}$ of solutions to \eqref{eq:problem_red} contains a subsequence converging strongly in $L^2(0,T;L^2(\omega_C))$ to a solution $\bar u\in \Vad$ of
    \begin{equation}\label{eq:problem_limit}
        \min_{u\in \Vad} \norm{(C_{\omega_T}Su-U)^-}_{L^1(\omega_T)}+c\norm{(C_{\omega_R}Su-L)^+}_{L^1(\omega_R)}.
    \end{equation}
\end{theorem}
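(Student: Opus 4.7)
The plan is to combine Tonelli's direct method with an optimality-inequality trick to upgrade weak convergence to strong. Writing $F(u) := \tfrac12\norm{u}_V^2 + \tfrac{\alpha}{2}\norm{Su-z}_{L^2(Q)}^2$ for the smooth part of the cost and
\begin{equation*}
    J(u) := \norm{(C_{\omega_T}Su-U)^-}_{L^1(\omega_T)} + c\norm{(C_{\omega_R}Su-L)^+}_{L^1(\omega_R)}
\end{equation*}
for the penalty, \eqref{eq:problem_red} reads $\min_{u\in\Vad}F(u)+\beta J(u)$ and \eqref{eq:problem_limit} reads $\min_{u\in\Vad}J(u)$. Since $\Vad$ is bounded, closed, and convex in the Hilbert space $V$, it is weakly sequentially compact, so I would first extract a subsequence $u_{\beta_k}\wkto\bar u$ in $V$ with $\bar u\in\Vad$. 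Complete continuity of $S$ then gives $Su_{\beta_k}\to S\bar u$ strongly in $L^2(Q)$; since $v\mapsto v^\pm$ is $1$-Lipschitz on $L^1$ and $C_{\omega_T},C_{\omega_R}$ map boundedly into $L^1$, this in turn yields $J(u_{\beta_k})\to J(\bar u)$.

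Next I would identify $\bar u$ as a minimizer of \eqref{eq:problem_limit}. Testing the optimality of $u_{\beta_k}$ against an arbitrary $v\in\Vad$ gives
\begin{equation*}
    J(u_{\beta_k}) \le J(v) + \frac{1}{\beta_k}\bigl(F(v)-F(u_{\beta_k})\bigr),
\end{equation*}
and since $\Vad$ is bounded (so $F(u_{\beta_k})$ and $F(v)$ are bounded uniformly in $k$), the $1/\beta_k$ remainder vanishes; combined with the convergence of $J(u_{\beta_k})$ this yields $J(\bar u)\le J(v)$ for every $v\in\Vad$.

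The main obstacle is upgrading the convergence $u_{\beta_k}\wkto \bar u$ to strong convergence in $V$: the limit problem lacks the strictly convex control cost $\tfrac12\norm{u}_V^2$, so neither uniqueness of $\bar u$ nor norm control from the limit functional is available a priori. I would overcome this by reusing the optimality inequality with the specific test $v=\bar u$, which rearranges to
\begin{equation*}
    F(u_{\beta_k}) + \beta_k\bigl(J(u_{\beta_k})-J(\bar u)\bigr) \le F(\bar u).
\end{equation*}
Since $\bar u$ minimizes $J$, the bracket on the left is non-negative and can be dropped; taking $\limsup$ and cancelling the tracking term via $Su_{\beta_k}\to S\bar u$ yields $\limsup\norm{u_{\beta_k}}_V^2\le\norm{\bar u}_V^2$. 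Combined with the weak lower semicontinuity $\norm{\bar u}_V\le\liminf\norm{u_{\beta_k}}_V$, this gives convergence of norms, which in the Hilbert space $V$ together with weak convergence implies strong convergence of $u_{\beta_k}$ to $\bar u$.
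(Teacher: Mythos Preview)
Your proof is correct and takes a genuinely different, more variational route than the paper. The paper works through the first-order optimality system: it extracts weakly convergent Lagrange multipliers $\mu_n^\pm$ (uniformly bounded in $L^\infty$ by the structure of $\partial g^\pm$), invokes Schauder's theorem to transfer complete continuity to the adjoint $S_0^*$, passes to the limit in the projection relation $u_n=\proj_{\Vad}(\cdots)$ via continuity of the metric projection to obtain strong convergence of $u_n$, and then verifies by a pointwise case analysis that the limit satisfies the optimality conditions of \eqref{eq:problem_limit}. Your argument bypasses this machinery entirely: you stay at the level of functional values, identify $\bar u$ as a minimizer of $J$ directly from the optimality inequality, and recover strong convergence by the standard Hilbert-space trick $\limsup\norm{u_{\beta_k}}_V\le\norm{\bar u}_V\le\liminf\norm{u_{\beta_k}}_V$. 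Your route is shorter and needs neither \cref{thm:optcond}, nor Schauder's theorem, nor weak--strong closedness of subdifferentials; the paper's route, on the other hand, yields as a byproduct convergence of the multipliers and an explicit bang--bang characterization of the limiting control, which may be of independent interest even though it is not part of the stated theorem.
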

\begin{proof}
    Since $\Vad$ is closed and assumed to be bounded, the family $\{u_\beta\}_{\beta>0}$ contains a sequence $\{u_n\}_{n\in\N}$ with $u_n\wkto \bar u\in \Vad$ with $\beta_n\rightarrow\infty$. From \cref{thm:optcond}, we obtain for every $u_n$ a corresponding pair of Lagrange multipliers 
    \begin{equation}
        \mu_n^+\in \partial G^+(C_{\omega_R}Su_n),\qquad 
        \mu_n^-\in \partial G^-(C_{\omega_T}Su_n).
    \end{equation}
    The pointwise characterization \eqref{eq:subdiff} implies that $\{\mu_n^+\}_{n\in\N}$ and $\{\mu_n^-\}_{n\in\N}$ are uniformly bounded pointwise almost everywhere. We can thus extract a further subsequence such that $\mu_n^+ \wkto \bar \mu^+$ in $L^2(\omega_R)$ and $\mu_n^- \wkto \bar \mu^-$ in $L^2(\omega_T)$.
    The complete continuity assumption on $S$ now implies $Su_n\to S\bar u$ in $L^2(Q)$ and hence $C_\omega Su_n \to C_\omega S\bar u$ in $L^2(\omega)$ (where $\omega$ is either $\omega_R$ or $\omega_T$). Hence, the weak-strong closedness of subdifferentials (see, e.g., \cite[Prop.~16.26]{Bauschke:2011}) yields that
    \begin{equation}\label{eq:optsys_limit1}
        \bar \mu^+\in \partial G^+(C_{\omega_R}S\bar u),\qquad 
        \bar \mu^-\in \partial G^-(C_{\omega_T}S\bar u).
    \end{equation}
    By Schauder's theorem and the reflexivity of $V$ and $L^2(Q)$, its adjoint $S_0^*$ is completely continuous as well. We can thus similarly deduce that
    \begin{equation}
        p_n^+:=-S_0^*(C_{\omega_R}^*\mu_n^+) \to -S_0^*(C_{\omega_R}^*\bar \mu^+)=:\bar p^+\quad\text{and}\quad p_n^-:=-S_0^*(C_{\omega_T}^*\mu_n^+) \to -S_0^*(C_{\omega_T}^*\bar\mu^+)=:\bar p^-,
    \end{equation}
    and that $S_0^*(Su_n-z)\to S_0^*(S\bar u-z)$.
    Since $\Vad$ is a closed and convex subset of $L^2(0,T;L^2(\omega_C))$, the projection $\proj_{\Vad}$ is continuous (see, e.g., \cite[Prop.~12.27]{Bauschke:2011} and use again that $\proj_{\Vad}$ coincides with the proximal mapping of the corresponding indicator function $\delta_{\Vad}$). Hence, we obtain from the first relation of \eqref{eq:optsys} that $u_n\to \bar u$ strongly as well.

    By passing to a further subsequence, we can assume that the convergence is pointwise almost everywhere. We now set $\bar  p:=\bar p^++c\bar p^-$ and carry out a pointwise inspection of \eqref{eq:optsys}.
    \begin{enumerate}[(i)]
        \item $\bar p(x)>0$: In this case, 
            \begin{equation}
                [-\alpha S_0^*(Su_n-z) + \beta_n ( p_n^+ + c  p_n^-)](x) \to \infty,
            \end{equation}
            which implies that there exists an $N\in\N$ such that
            \begin{equation}
                \bar u(x) = u_n(x) = U_{\min} \qquad\text{for all }n>N.
            \end{equation}
        \item $\bar p(x)<0$: In this case,
            \begin{equation}
                [-\alpha S_0^*(Su_n-z) + \beta_n ( p_n^+ + c  p_n^-)](x) \to -\infty,
            \end{equation}
            and we similarly obtain the existence of an $N\in\N$ such that
            \begin{equation}
                \bar u(x) = u_n(x) = U_{\max} \qquad\text{for all }n>N.
            \end{equation}
        \item $\bar p(x)=0$: In this case, we can only conclude that $\bar u(x)\in [U_{\min},U_{\max}]$.
    \end{enumerate}
    We thus conclude that
    \begin{equation}\label{eq:optsys_limit2}
        \bar u(x) \in \begin{cases} \{U_{\min}\} & \text{if }\bar  p(x) <0,\\
            [U_{\min},U_{\max}] & \text{if }\bar  p(x) = 0,\\
            \{U_{\max}\} & \text{if }\bar  p(x) >0.
        \end{cases}
    \end{equation}

    Now consider \eqref{eq:problem_limit}. Proceeding as in the proof of \cref{thm:optcond}, we deduce for any solution $\bar u$ the existence of $\bar\mu^+,\bar \mu^-$ satisfying \eqref{eq:optsys_limit1} such that 
    \begin{equation}
        \bar p:= -S_0^*(C_{\omega_R}\bar \mu^+ + c C_{\omega_T}\bar \mu^-) \in \partial\delta_{\Vad}(\bar u),
    \end{equation}
    which can be reformulated as
    \begin{equation}
        \bar u = \proj_{\Vad}(\bar u+\bar p).
    \end{equation}
    By pointwise inspection, this is equivalent to \eqref{eq:optsys_limit2}. Hence, the limit of $\{u_n\}_{n\in\N}$ satisfies the optimality conditions for the convex problem \eqref{eq:problem_limit} and is therefore a minimizer.
\end{proof}
Note that \eqref{eq:problem_limit} does not coincide with \eqref{eq:boxproblem}, which may not admit a solution. However, if there exists a solution to \eqref{eq:boxproblem}, it is obviously also a solution to \eqref{eq:problem_limit}. In fact, under this assumption, standard arguments show weak subsequential convergence of $u_\beta$ to a solution to \eqref{eq:boxproblem}; for the sake of completeness, we give a full proof here.
\begin{proposition}\label{thm:opt_limit_feas}
    Assume that \eqref{eq:boxproblem} admits a solution $\hat u$. Then for $\beta\to \infty$, the family $\{u_\beta\}_{\beta>0}$ of solutions to \eqref{eq:problem_red} contains a subsequence converging weakly in $L^2(0,T;L^2(\omega_C))$ to $\hat u$.
\end{proposition}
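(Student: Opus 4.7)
The plan is to exploit feasibility of $\hat u$ for the box problem to derive a uniform bound on $\{u_\beta\}$ and on the penalty terms, and then use weak lower semicontinuity together with uniqueness of minimizers of \eqref{eq:boxproblem} to identify the limit.

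First, I would insert $\hat u$ into the penalized cost. Since $\hat u$ is feasible for \eqref{eq:boxproblem}, both penalty terms vanish at $\hat u$, and by optimality of $u_\beta$ one obtains
\begin{equation}
    \tfrac{1}{2}\norm{u_\beta}_V^2 + \tfrac{\alpha}{2}\norm{Su_\beta-z}_{L^2(Q)}^2 + \beta\norm{(C_{\omega_T}Su_\beta-U)^-}_{L^1(\omega_T)} + c\beta\norm{(C_{\omega_R}Su_\beta-L)^+}_{L^1(\omega_R)} \leq \tfrac{1}{2}\norm{\hat u}_V^2 + \tfrac{\alpha}{2}\norm{S\hat u-z}_{L^2(Q)}^2.
\end{equation}
Since all summands on the left are nonnegative, this gives a uniform bound on $\norm{u_\beta}_V$, so I can extract a subsequence $u_n\wkto \tilde u$ in $V$; closedness and convexity of $\Vad$ give $\tilde u\in\Vad$. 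Dividing by $\beta_n$ also shows that both penalty terms tend to zero.

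Next, I would identify $\tilde u$ as feasible for \eqref{eq:boxproblem}. Complete continuity of $S$ yields $Su_n\to S\tilde u$ in $L^2(Q)$, hence $C_{\omega_T}Su_n\to C_{\omega_T}S\tilde u$ and $C_{\omega_R}Su_n\to C_{\omega_R}S\tilde u$ in $L^2$ on the respective subdomains. Since $v\mapsto (v-U)^-$ and $v\mapsto (v-L)^+$ are Lipschitz, the corresponding $L^1$-norms are continuous along this sequence, so in the limit $\norm{(C_{\omega_T}S\tilde u-U)^-}_{L^1(\omega_T)}=0$ and $\norm{(C_{\omega_R}S\tilde u-L)^+}_{L^1(\omega_R)}=0$, giving $C_{\omega_T}S\tilde u\geq U$ a.e.\ in $\omega_T$ and $C_{\omega_R}S\tilde u\leq L$ a.e.\ in $\omega_R$.

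Finally, I would compare objective values. Dropping the nonnegative penalty terms from the inequality above and passing to the liminf, weak lower semicontinuity of $\norm{\cdot}_V^2$ and continuity of $u\mapsto\norm{Su-z}_{L^2(Q)}^2$ under the strong convergence $Su_n\to S\tilde u$ yield
\begin{equation}
    \tfrac{1}{2}\norm{\tilde u}_V^2 + \tfrac{\alpha}{2}\norm{S\tilde u-z}_{L^2(Q)}^2 \leq \liminf_{n\to\infty}\left[\tfrac{1}{2}\norm{u_n}_V^2 + \tfrac{\alpha}{2}\norm{Su_n-z}_{L^2(Q)}^2\right] \leq \tfrac{1}{2}\norm{\hat u}_V^2 + \tfrac{\alpha}{2}\norm{S\hat u-z}_{L^2(Q)}^2.
\end{equation}
Since $\tilde u$ is admissible for \eqref{eq:boxproblem}, it must therefore also be a minimizer; strict convexity of $\tfrac12\norm{\cdot}_V^2$ then forces $\tilde u=\hat u$, which completes the proof.

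The only subtle step is the passage $(C_\omega Su_n-\cdot)^\pm \to (C_\omega S\tilde u-\cdot)^\pm$ in $L^1$; this is where complete continuity of $S$ (already invoked in \cref{thm:opt_limit}) is essential, and without it one would have to work harder to convert a weak limit of the clipped quantities into a pointwise feasibility statement. Everything else is a direct application of weak compactness and standard lower semicontinuity.
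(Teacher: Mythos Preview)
Your proof is correct and follows the same route as the paper: compare with the feasible $\hat u$ to obtain uniform bounds, extract a weakly convergent subsequence in $\Vad$, show the penalty terms vanish to deduce feasibility of the limit, and conclude via weak lower semicontinuity and strict convexity. The only minor difference is in the feasibility step, where you invoke complete continuity of $S$ while the paper passes to a pointwise a.e.\ convergent subsequence; in fact neither device is strictly necessary here, since the convex continuous functionals $v\mapsto\norm{(v-U)^-}_{L^1}$ and $v\mapsto\norm{(v-L)^+}_{L^1}$ are weakly lower semicontinuous on $L^2$, so feasibility of the weak limit already follows from $C_\omega Su_n\rightharpoonup C_\omega S\tilde u$.
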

\begin{proof}
    By optimality of $u_\beta$ for any $\beta>0$ and feasibility of $\hat u$, we have that 
    \begin{equation}\label{eq:limit_feas1}
        \begin{aligned}[t]
            \frac{1}{2}\norm{u_\beta}^2_{V}
            &\leq \frac{1}{2}\norm{u_\beta}^2_{V}+\frac{\alpha}{2}\norm{Su_\beta -z}^2_{L^2(Q)}+\beta\norm{(C_{\omega_T}Su_\beta-U)^-}_{L^1(\omega_T)}\\
            \MoveEqLeft[-12]+c\beta\norm{(C_{\omega_R}Su_\beta-L)^+}_{L^1(\omega_R)}\\
            &\leq  \frac{1}{2}\norm{\hat u}^2_{V}+\frac{\alpha}{2}\norm{S\hat u -z}^2_{L^2(Q)}.
        \end{aligned}
    \end{equation}
    Hence, the family $\{u_\beta\}_{\beta>0}$ is bounded in $V$ and therefore contains a sequence $\{u_n\}_{n\in\N}\subset \Vad$ with $\beta_n\to\infty$ and $u_n\wkto u^*$ for some $u^*\in V$. Since $\Vad$ is convex and closed, it follows that $u^*\in \Vad$ as well. 
    Similarly, we obtain from \eqref{eq:limit_feas1} that as $\beta\to\infty$,
    \begin{equation}
        \norm{(C_{\omega_T}Su_\beta-U)^-}_{L^1(\omega_T)}+c\norm{(C_{\omega_R}Su_\beta-L)^+}_{L^1(\omega_R)}
        \leq \frac1\beta\left(\frac{1}{2}\norm{\hat u}^2_{V}+\frac{\alpha}{2}\norm{S\hat u -z}^2_{L^2(Q)}\right) \to 0.
    \end{equation}
    By continuity of $S$ and $C_\omega$, and possibly after passing to a further subsequence such that $u_n\to u^*$ pointwise almost everywhere, we deduce from this that
    \begin{equation}
        C_{\omega_T}Su^* \geq U \quad\text{a.e. in }\omega_T,\qquad
        C_{\omega_R}Su^* \leq L \quad\text{a.e. in }\omega_R,
    \end{equation}
    and hence that $u^*$ is feasible for \eqref{eq:boxproblem} with $y^*=Su^*$.
    Continuity of $S$, weak lower semi-continuity of norms, and optimality of $u_n=u_{\beta_n}$ then implies that 
    \begin{equation}
        \begin{aligned}
            \frac{1}{2}\norm{u^*}^2_{V}+\frac{\alpha}{2}\norm{Su^* -z}^2_{L^2(Q)} 
            &\leq \liminf_{n\to\infty}  \frac{1}{2}\norm{u_n}^2_{V}+\frac{\alpha}{2}\norm{Su_n -z}^2_{L^2(Q)}\\
            &\leq \limsup_{n\to\infty}  \frac{1}{2}\norm{u_n}^2_{V}+\frac{\alpha}{2}\norm{Su_n -z}^2_{L^2(Q)} + \beta_n\norm{(C_{\omega_T}Su_n-U)^-}_{L^1(\omega_T)}\\
            \MoveEqLeft[-15]+c\beta_n\norm{(C_{\omega_R}Su_n-L)^+}_{L^1(\omega_R)}\\
            &\leq \frac{1}{2}\norm{\tilde u}^2_{V}+\frac{\alpha}{2}\norm{S\tilde u -z}^2_{L^2(Q)} 
        \end{aligned}
    \end{equation}
    for any feasible $\tilde u \in \Vad$, i.e., $u^*$ is a minimizer of \eqref{eq:boxproblem}. Since \eqref{eq:boxproblem} is strictly convex, the minimizer---if it exists---must be unique, which yields $u^*=\hat u$.
\end{proof}
Since the penalization of feasible constraints is not the focus of this paper, we omit further analysis of this case and refer instead to, e.g., \cite{Gugat:2010,Gugat:2010b}.

\section{Numerical solution}\label{sec:numsol}

In order to solve \eqref{eq:regoptsys}, we proceed similarly to \cite{Clason:2011} and use a semismooth Newton method applied to a Moreau--Yosida regularization of \eqref{eq:optsys}.

\subsection{Moreau--Yosida regularization}\label{sec:MYreg}

To compute the Moreau--Yosida regularization, we replace $\partial g^+$ for $\gamma>0$ by
\begin{equation}\label{eq:my_reg}
    \partial g^+_\gamma(v) := (\partial g^+)_\gamma(v) := \frac{1}{\gamma} \left(v-\prox_{\gamma g^+}(v)\right),
\end{equation}
where
\begin{equation}
    \prox_{\gamma g^+}(v) := \arg\min_{w\in \R}\frac1{2\gamma}|w-v|^2 + g^+(w) = \left(\Id + \gamma\partial g^+\right)^{-1}(v)
\end{equation}
is the proximal mapping of $g^+$, which in Hilbert spaces coincides with the resolvent of $\partial g^+$; see, e.g., \cite[Prop.~16.34]{Bauschke:2011}.  Note that the proximal mapping and
thus the Moreau--Yosida regularization of a proper and convex functional is always single-valued and Lipschitz continuous; see, e.g., \cite[Corollary~23.10]{Bauschke:2011}.

We begin by calculating the proximal mapping of $g^+$, proceeding as in \cite{Clason:2014}.
For given $\gamma>0$ and $v\in\R$, the resolvent $w:=(\Id+\gamma\partial g^+)^{-1}(v)$ is characterized by the subdifferential inclusion
\begin{equation}\label{eq:resolvent}
    v\in (\Id+\gamma\partial g^+)(w) = \{w\}+\gamma\partial g^+(w).
\end{equation}
We now follow the case discrimination in the characterization \eqref{eq:subdiff} of the subdifferential.
\begin{enumerate}[(i)]
    \item $w<L$: In this case, we have that $v = w <L$.
    \item $w>L$: In this case, we have that $v = w+\gamma < L + \gamma$, i.e., $w=v-\gamma$.
    \item $w=L$: In this case, we have that $v \in w +\gamma[0,1] = [L,L+\gamma]$.
\end{enumerate}
Since these cases yield a complete and disjoint case distinction for $v$, we obtain
\begin{equation}
    \prox_{\gamma g^+}(v)=
    \begin{cases}
        v & \text{if } v<L, \\
        L & \text{if } v\in [L,L+\gamma],\\
        v-\gamma & \text{if } v>L+\gamma.
    \end{cases}
\end{equation}
Inserting this into the definition of the Moreau--Yosida regularization gives
\begin{equation}
    \partial g^+_\gamma(v) =
    \begin{cases}
        0 & \text{if } v< L, \\
        \frac1\gamma (v-L) & \text{if } v\in [L,L+\gamma],\\
        1 & \text{if } v>L+\gamma.
    \end{cases}
\end{equation}
Proceeding similarly for $g^-$, we find that
\begin{equation}
    \prox_{\gamma g^-}(v)=
    \begin{cases}
        v+\gamma & \text{if } v<U-\gamma, \\
        U & \text{if } v\in [U-\gamma,U],\\
        v & \text{if } v>U,
    \end{cases}
\end{equation}
and hence
\begin{equation}
    \partial g^-_\gamma(v) =
    \begin{cases}
        -1 & \text{if } v< U-\gamma, \\
        \frac1\gamma (v-U) & \text{if } v\in [U-\gamma,U],\\
        0 & \text{if } v>U.
    \end{cases}
\end{equation}

Replacing the subdifferentials with their regularizations in \eqref{eq:optsys}, we arrive at the regularized system
\begin{equation}
    \label{eq:regoptsys}
    \left\{\begin{aligned}
            u_\gamma &= \proj_{\Vad}\left(-S_0^*\left(\alpha (Su_\gamma-z) + \beta_1 C^*_{\omega_R}\mu^+_\gamma  + \beta_2C^*_{\omega_T}\mu^-_\gamma\right)\right),\\
            \mu^+_\gamma (x) &= \partial g^+_\gamma([C_{\omega_R}Su_\gamma](x)),\\
            \mu^-_\gamma (x) &= \partial g^-_\gamma([C_{\omega_T}Su_\gamma](x)).
    \end{aligned}\right.
    \tag{OS$_\gamma$}
\end{equation}

\begin{theorem}\label{thm:reg_existence}
    For every $\gamma>0$, there exists $(u_\gamma,\mu^+_\gamma,\mu^+_\gamma)$ satisfying \eqref{eq:regoptsys}.
\end{theorem}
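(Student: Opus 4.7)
The plan is to derive \eqref{eq:regoptsys} as the first-order optimality system of a smooth regularized reduced problem, replaying the pattern of \cref{thm:existence,thm:optcond} with the non-smooth penalty integrands replaced by their Moreau envelopes
\[
g^\pm_\gamma(v):=\min_{w\in\R}\Bigl(\tfrac{1}{2\gamma}|w-v|^2+g^\pm(w)\Bigr).
\]
Standard Moreau--Yosida theory (see, e.g., \cite[Prop.~12.15,~12.30]{Bauschke:2011}) ensures that each $g^\pm_\gamma$ is convex, nonnegative, and continuously differentiable, with a globally Lipschitz derivative that coincides with the $\partial g^\pm_\gamma$ computed explicitly in \S\ref{sec:MYreg}. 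Setting
\[
G^-_\gamma(y):=\int_{\omega_T} g^-_\gamma(y(x))\,dx,\qquad G^+_\gamma(y):=\int_{\omega_R} g^+_\gamma(y(x))\,dx,
\]
the Lipschitz bound on $\partial g^\pm_\gamma$ shows that $G^\pm_\gamma$ is convex, continuous, and Fréchet differentiable on the corresponding $L^2$ space, with gradient given pointwise by the superposition of $\partial g^\pm_\gamma$.

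Next, I would establish existence of a minimizer $u_\gamma\in\Vad$ for the regularized reduced problem
\[
\min_{u\in\Vad}\ \tfrac12\norm{u}_V^2+\tfrac{\alpha}{2}\norm{Su-z}_{L^2(Q)}^2+\beta_1 G^-_\gamma(C_{\omega_T}Su)+\beta_2 G^+_\gamma(C_{\omega_R}Su).
\]
This functional is strictly convex, coercive in $u$, and weakly lower semicontinuous (as a sum of convex continuous compositions with the affine continuous operator $S$ and the linear continuous operator $C_\omega$) on the closed convex set $\Vad$, so Tonelli's direct method produces a unique minimizer exactly as in \cref{thm:existence}.

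For the optimality conditions, I would apply the sum and chain rules of convex analysis as in the proof of \cref{thm:optcond} to obtain
\[
0\in\{u_\gamma\}+\alpha S_0^*(Su_\gamma-z)+\beta_1 S_0^*C_{\omega_T}^*\nabla G^-_\gamma(C_{\omega_T}Su_\gamma)+\beta_2 S_0^*C_{\omega_R}^*\nabla G^+_\gamma(C_{\omega_R}Su_\gamma)+\partial\delta_{\Vad}(u_\gamma).
\]
Defining $\mu^-_\gamma(x):=\partial g^-_\gamma([C_{\omega_T}Su_\gamma](x))$ and $\mu^+_\gamma(x):=\partial g^+_\gamma([C_{\omega_R}Su_\gamma](x))$ reproduces the last two lines of \eqref{eq:regoptsys}, while the proximal-mapping/projection identity for $(\Id+\partial\delta_{\Vad})^{-1}=\proj_{\Vad}$ used at the end of \cref{thm:optcond} delivers the first line.

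The only step that is not a verbatim adaptation of earlier arguments is the Fréchet differentiability of $G^\pm_\gamma$ on $L^2$ with gradient given by the superposition $y\mapsto\partial g^\pm_\gamma(y(\cdot))$; this is the main technical point but it follows in a standard way from the global Lipschitz continuity of $\partial g^\pm_\gamma$ (which makes the superposition bounded and continuous from $L^2$ into $L^2$) together with a dominated-convergence bound on the first-order remainder. Everything else is parallel to the unregularized case.
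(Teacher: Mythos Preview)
Your proposal is correct and follows essentially the same route as the paper: identify \eqref{eq:regoptsys} as the first-order optimality system of the regularized problem obtained by replacing $g^\pm$ with their Moreau envelopes $g^\pm_\gamma$, and invoke existence of a unique minimizer for this convex problem. The paper's proof is simply a terser version of yours, citing the fact that $\partial g^\pm_\gamma$ is the derivative of the Moreau envelope and that the resulting problem \eqref{eq:regprob} admits a unique solution, without spelling out the direct-method and chain-rule details you (correctly) supply.
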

\begin{proof}
    We use the fact that $\partial g^+_\gamma(v)$ is the derivative of the (convex and lower semi-continuous) Moreau envelope
    \begin{equation}
        g^+_\gamma(v) := g^+(\prox_{\gamma g^+}(v)) + \frac1{2\gamma} |v-\prox_{\gamma g^+}(v)|^2,
    \end{equation}
    see, e.g., \cite[Remark 12.24, Proposition 12.29]{Bauschke:2011}; a similar statement holds for $\partial g_\gamma^-$. Hence, \eqref{eq:regoptsys} are the necessary optimality conditions of the convex minimization problem
    \begin{equation}\label{eq:regprob}
        \min_{u\in \Vad} \frac{1}{2}\norm{u}^2_{V}+\frac{\alpha}{2}\norm{Su-z}^2_{L^2(Q)}+\beta_1 \int_{\omega_T} g^-_\gamma(C_{\omega_T}Su)\,dx + \beta_2 \int_{\omega_T} g^+_\gamma(C_{\omega_R}Su)\,dx,
    \end{equation}
    which admits a unique solution.
\end{proof}

\begin{remark}
    The Moreau envelopes of $g^+$ and $g^-$ are given by
    \begin{equation}
        g^+_\gamma(v) =
        \begin{cases}
            0 & \text{if } v< L, \\
            v-L-\frac\gamma2 & \text{if } v>L+\gamma,\\
            \frac1{2\gamma} (v-L)^2 & \text{if } v\in [L,L+\gamma],
        \end{cases}
        \qquad
        g^-_\gamma(v) =
        \begin{cases}
            U-v-\frac\gamma2 & \text{if } v<U-\gamma,\\
            0 & \text{if } v> U, \\
            \frac1{2\gamma} (v-U)^2 & \text{if } v\in [U-\gamma,U].
        \end{cases}
    \end{equation}
    The Moreau--Yosida regularization of the dose penalty is thus related to the well-known Huber-regularization of the $L^1$ norm.
\end{remark}
We conclude this section by noting that solutions to the regularized system \eqref{eq:regoptsys}  converge weakly up to a subsequence to solutions of the original optimality system \eqref{eq:optsys}.
\begin{theorem}
    The family $\{(u_{\gamma},\mu^+_{\gamma},\mu^-_{\gamma})\}_{\gamma>0}$ contains a sequence $\{(u_{\gamma_n},\mu^+_{\gamma_n},\mu^-_{\gamma_n})\}_{n\in\N}$ converging weakly to a solution $(\bar{u},\bar{\mu}^+,\bar{\mu}^-)$ of \eqref{eq:optsys}.
\end{theorem}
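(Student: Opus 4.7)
The plan is to extract a weakly convergent subsequence of the regularized triples and then pass to the limit in each component of \eqref{eq:regoptsys}, verifying each of the three relations of \eqref{eq:optsys}. The essential ingredients are boundedness of $\Vad$, the pointwise bounds on $\partial g^\pm_\gamma$ built into the formulas derived in \cref{sec:MYreg}, the (complete) continuity of $S$ used already in \cref{thm:opt_limit}, and weak--strong graph closedness of maximal monotone subdifferentials.

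First I would establish uniform a priori bounds. Since $\Vad$ enforces pointwise control constraints, $\{u_\gamma\}_{\gamma>0}$ is bounded in $V$. From the explicit formulas for $\partial g^+_\gamma$ and $\partial g^-_\gamma$ we read off the pointwise bounds $\mu^+_\gamma(x) \in [0,1]$ and $\mu^-_\gamma(x) \in [-1,0]$, so the regularized multipliers are uniformly bounded in $L^\infty(\omega_R)$ and $L^\infty(\omega_T)$, respectively, hence in the corresponding $L^2$ spaces. Extracting a subsequence $\gamma_n\to 0$, I obtain $u_{\gamma_n}\wkto \bar u\in\Vad$ in $V$, $\mu^+_{\gamma_n}\wkto \bar\mu^+$ in $L^2(\omega_R)$, and $\mu^-_{\gamma_n}\wkto \bar\mu^-$ in $L^2(\omega_T)$, with the pointwise almost everywhere bounds preserved in the limit.

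Second, I would pass to the limit in the projection equation. Invoking complete continuity of $S$ (as in \cref{thm:opt_limit}), $Su_{\gamma_n}\to S\bar u$ strongly in $L^2(Q)$, and since $C_{\omega_R},\ C_{\omega_T}$ are bounded linear, $C_\omega Su_{\gamma_n}\to C_\omega S\bar u$ strongly in the respective $L^2$ spaces. By Schauder's theorem $S_0^*$ is completely continuous as well, so $S_0^*(\alpha(Su_{\gamma_n}-z) + \beta_1 C_{\omega_R}^*\mu^+_{\gamma_n} + \beta_2 C_{\omega_T}^*\mu^-_{\gamma_n})$ converges strongly in $V$ to the analogous expression with the limit data. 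Continuity of $\proj_{\Vad}$ (from its identification with the proximal map of $\delta_{\Vad}$) then yields the first relation of \eqref{eq:optsys}, and in particular the strong convergence $u_{\gamma_n}\to\bar u$ in $V$.

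The main obstacle is passing to the limit in the two subdifferential relations, since in the graph of a maximal monotone operator one typically needs strong convergence on at least one factor. I would exploit the standard Moreau--Yosida identity
\begin{equation}
    \mu^\pm_{\gamma_n} \in \partial g^\pm\bigl(\prox_{\gamma_n g^\pm}(C_\omega Su_{\gamma_n})\bigr),
    \qquad
    \prox_{\gamma_n g^\pm}(v) = v - \gamma_n\,\partial g^\pm_{\gamma_n}(v),
\end{equation}
which together with the uniform pointwise bound on the multipliers and $\gamma_n\to 0$ shows that the arguments $\prox_{\gamma_n g^\pm}(C_\omega Su_{\gamma_n})$ converge strongly to $C_\omega S\bar u$ in $L^2$. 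Weak--strong closedness of the graph of the maximal monotone operator $\partial G^\pm$ (see, e.g., \cite[Prop.~16.26, Prop.~20.33]{Bauschke:2011}) then gives $\bar\mu^+\in\partial G^+(C_{\omega_R}S\bar u)$ and $\bar\mu^-\in \partial G^-(C_{\omega_T}S\bar u)$, which by the pointwise characterization of subdifferentials of integral functionals used already in \cref{thm:optcond} is exactly the pointwise inclusion required in \eqref{eq:optsys}. This closes the passage to the limit and shows that $(\bar u,\bar\mu^+,\bar\mu^-)$ solves the unregularized optimality system.
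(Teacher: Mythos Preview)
Your proof is correct and follows the same overall strategy as the paper---uniform bounds on $(u_\gamma,\mu_\gamma^+,\mu_\gamma^-)$, extraction of a weakly convergent subsequence, and passage to the limit in each relation of \eqref{eq:regoptsys}. There are two minor differences worth noting. First, the paper does not invoke boundedness of $\Vad$ to bound $\{u_\gamma\}$; instead it bounds $\{p_\gamma\}$ via the bounded operators $S_0^*$, $C_\omega^*$ and the uniform pointwise bounds on $\mu_\gamma^\pm$, and then uses $u_\gamma=\proj_{\Vad}(-p_\gamma)$. Second, for the subdifferential inclusions the paper appeals directly to \cite[Lemma~1.3\,(e)]{BreCraPaz70} on Yosida approximations of maximal monotone operators, rather than importing the complete continuity of $S$ from \cref{thm:opt_limit} and unpacking the prox identity as you do; your route is more explicit and self-contained, at the cost of adding a hypothesis that the paper's statement does not list. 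The paper handles the first relation ``similarly'' by rewriting the projection as a subdifferential inclusion for $\delta_{\Vad}$, whereas you obtain it from strong convergence of the argument of $\proj_{\Vad}$ via complete continuity of $S_0^*$.
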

\begin{proof}
    The proof follows largely that of \cite[Prop.~2.5]{Clason:2014}. We note that for any $u\in V$, $\partial g_\gamma^+(u(x))$ and $\partial g_\gamma^-(u(x))$ are bounded almost everywhere, implying that $\{\mu^+_{\gamma}\}_{\gamma>0},\{\mu^-_{\gamma}\}_{\gamma>0}$ are bounded. As $S_0^*$ and $C_\omega^*$ are bounded linear operators, the family
    \begin{equation}
        \{p_\gamma\}_{\gamma>0}:=\left\{S_0^*\left(\alpha (S u_{\gamma} -z) + \beta_1C^*_{\omega_R}\mu_{\gamma}^+ + \beta_2C^*_{\omega_T} \mu_{\gamma}^-\right)\right\}_{\gamma >0 }
    \end{equation}
    is bounded in $V$. This in turn implies the boundedness of $\{u_{\gamma}\}_{\gamma>0}$. Hence, there exists a subsequence converging weakly to $(\hat p, \hat{u},\hat{\mu}^+,\hat{\mu}^-)$. As $g^+$ and $g^-$ are convex, and therefore $\partial g^+(u(x))$ and $\partial g^-(u(x))$ are maximal monotone for every $u\in V$ and almost every $x\in \omega_C$, we have by \cite[Lemma 1.3\,(e)]{BreCraPaz70} that $\hat{\mu}^+,\hat{\mu}^-$ satisfy the second and third relations of \eqref{eq:optsys}. The first relation follows similarly, using that $u= \proj_{\Vad}(p)$ is equivalent to the subdifferential inclusion $-u +p \in\partial\delta_{\Vad}(u)$; see the proof of \cref{thm:optcond}. 
\end{proof}

\subsection{Semismooth Newton method}\label{sec:SSN}

The solution to \eqref{eq:regoptsys} can be computed using a semismooth Newton method \cite{Kunisch:2008a,Ulbrich:2011}. Since $h^+_\gamma := \partial g^+_\gamma$ and $h^-_\gamma:=\partial g^-_\gamma$ are globally Lipschitz continuous and piecewise differentiable, they are Newton-differentiable with Newton derivatives given by
\begin{equation}
D_N h^+_\gamma (v) = \begin{cases} \frac 1\gamma & \text{if } v\in [L,L+\gamma],\\ 0 &\text{else},\end{cases}\qquad
D_N h^-_\gamma (v) = \begin{cases} \frac 1\gamma & \text{if } v\in [U-\gamma,U],\\ 0 &\text{else},\end{cases}\qquad
\end{equation}
see, e.g.,  \cite[Proposition 2.26]{Ulbrich:2011}.
Similarly, $\proj_{\{[U_{\min},U_{\max}]\}}(v)$ is Newton-differentiable with Newton derivative given by
\begin{equation}
D_N \proj_{\{[U_{\min},U_{\max}]\}} (v) = \begin{cases}1 & \text{if } v\in [U_{\min},U_{\max}],\\ 0 &\text{else}.\end{cases}\qquad
\end{equation}
This implies that the corresponding superposition operators
$H_\gamma^\pm :L^p(\omega)\to L^2(\omega)$ and $\proj_{\Vad}:L^p(0,T;L^p(\omega_C))\to L^2(0,T;L^2(\omega_C))$ are semismooth,
with Newton derivatives given pointwise by, e.g.,
\begin{equation}
[D_N H^+_\gamma(y)](x) = \frac1\gamma [\chi^+(y)](x)\coloneqq \begin{cases} \frac 1\gamma & \text{if } y(x)\in [L,L+\gamma],\\ 0 &\text{else},\end{cases}
\end{equation}
and $D_N\proj_{\Vad}(y) = \chi_{\Vad}(y)$; see, e.g., \cite[Example 8.12]{Kunisch:2008a} or \cite[Theorem 3.49]{Ulbrich:2011}.

To apply a semismooth Newton to \eqref{eq:regoptsys}, we rewrite it by  eliminating $\mu_\gamma^+,\mu_\gamma^-$ as
\begin{equation}\label{eq:regoptsys_ref}
    u_\gamma - \proj_{\Vad}\left(-S_0^*\left(\alpha (Su_\gamma-z) + \beta_1 C^*_{\omega_R}H^+_\gamma(C_{\omega_R}Su_\gamma) + \beta_2 C^*_{\omega_T}H^-_\gamma(C_{\omega_T}Su_\gamma)\right)\right)=0.
\end{equation}
We further assume that the range of $S$ (and hence of $S_0^*$) is contained (not necessarily compactly) in $L^p(0,T;L^p(\Omega))$ for some $p>2$, which also implies that the range of $C_\omega S$ is contained in $L^p(\omega)$ for any subdomain $\omega\subset \Omega$.
By the sum and chain rules of Newton derivatives (see, e.g., \cite[Theorem 3.69]{Ulbrich:2011}) it then follows that \eqref{eq:regoptsys_ref}---taken as an operator equation $T(u)=0$ for $T:L^2(0,T;L^2(\omega_C)) \to L^2(0,T;L^2(\omega_C))$---is semismooth.

In order to establish the invertibility of the Newton step $D_N T(u^k)\delta u = -T(u^k)$, i.e.,
\begin{equation} \label{eq:Newtsys}
    \left(\Id + \chi_{\Vad}(-F(u^k))D_NF(u^k)\right)\delta u =
    - \left(u^k - \proj_{\Vad}(-F(u^k))\right)
\end{equation}
with
\begin{align}
    F(u^k)&:=S_0^*\left(\alpha (Su^k-z) + \beta_1 C^*_{\omega_R}H^+_\gamma(C_{\omega_R}Su^k) + \beta_2 C^*_{\omega_T}H^-_\gamma(C_{\omega_T}Su^k)\right),\\
    D_NF(u^k) &= S_0^*\left(\alpha + \tfrac{\beta_1}\gamma  C^*_{\omega_R}\chi^+(C_{\omega_R}S_0u^k) C_{\omega_R} + \tfrac{\beta_2}{\gamma}  C^*_{\omega_T}\chi^-(C_{\omega_T}S_0u^k)C_{\omega_T}\right)S_0,
\end{align}
we note that $D_NF(u^k)=S^*_0AS_0$ for a positive and self-adjoint linear operator $A$, and thus that $D_NF(u^k)$ is positive and self-adjoint for every $u^k$.  We recall the following result:
\begin{lemma}[Corrected Corollary 3.5 of \cite{HerStaWac12,HerStaWac15}]
    If $A$ and $B$ are positive, self-adjoint operators on a Hilbert space, then $\sigma(AB)\subset[0,\infty)$.
\end{lemma}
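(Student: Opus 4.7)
The plan is to reduce the problem to the spectral theorem for self-adjoint operators by symmetrizing the product $AB$ through the positive square root of $A$. Using the Borel functional calculus on the bounded positive self-adjoint operator $A$, one obtains its unique positive self-adjoint square root $A^{1/2}$, and one then studies the auxiliary operator $A^{1/2}BA^{1/2}$, which serves as a symmetrization of $AB$.

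First I would verify that $A^{1/2}BA^{1/2}$ is positive and self-adjoint. Self-adjointness is immediate from $(A^{1/2})^*=A^{1/2}$ and $B^*=B$; positivity follows from
\[
\langle A^{1/2}BA^{1/2}x,x\rangle = \langle B(A^{1/2}x),A^{1/2}x\rangle \geq 0,
\]
using positivity of $B$. By the spectral theorem for bounded self-adjoint operators, this gives $\sigma(A^{1/2}BA^{1/2})\subseteq[0,\infty)$.

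Next I would invoke the standard spectral identity (often called Jacobson's lemma) that for any bounded operators $S,T$ on a Banach space, $\sigma(ST)\setminus\{0\}=\sigma(TS)\setminus\{0\}$; this is a short consequence of an explicit formula expressing the resolvent of $I-TS$ in terms of that of $I-ST$. Applying it with $S=A^{1/2}$ and $T=A^{1/2}B$ gives $ST=AB$ and $TS=A^{1/2}BA^{1/2}$, and therefore
\[
\sigma(AB)\setminus\{0\} = \sigma(A^{1/2}BA^{1/2})\setminus\{0\} \subseteq [0,\infty).
\]
Since $0\in[0,\infty)$, the inclusion extends to all of $\sigma(AB)$.

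There is no genuine technical obstacle; the only points needing attention are that $A^{1/2}$ exists and is itself bounded, positive, and self-adjoint (which is standard for bounded positive self-adjoint operators, with boundedness implicit in the application to the Newton system), and that $0$ must be handled separately because Jacobson's lemma identifies only the nonzero spectra. Both are routine.
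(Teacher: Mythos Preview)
Your argument is correct: the symmetrization via the positive square root $A^{1/2}$ and the identity $\sigma(ST)\setminus\{0\}=\sigma(TS)\setminus\{0\}$ applied with $S=A^{1/2}$, $T=A^{1/2}B$ is precisely the standard route to this result, and you handle the point $0$ correctly. Note, however, that the paper does not actually prove this lemma; it is quoted verbatim from \cite{HerStaWac12,HerStaWac15} and used as a black box, so there is no in-paper proof to compare against. Your implicit boundedness assumption is appropriate here, since in the paper's application both operators arise as bounded Newton derivatives.
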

As $\chi_{\Vad}(u^k)$ is positive for any $u^k$, we have that $\sigma\left(\chi_{\Vad}(u^k) D_NF(u^k)\right)\subset[0,\infty)$ for any $k\in\N$, and therefore $\Id+\chi_{\Vad}(u^k) D_NF(u^k)$ is uniformly invertible.

By standard arguments, the uniform invertibility of the left-hand side operator in \eqref{eq:Newtsys} together with the Newton-differentiability implies local superlinear convergence of the corresponding semismooth Newton method to a solution to \eqref{eq:regoptsys} for each $\gamma>0$; see, e.g., \cite[Thm.~8.16]{Kunisch:2008a}, \cite[Chap.~3.2]{Ulbrich:2011}.

For given $h$, the application of the Newton derivative $D_N F(u^k)h$ can be computed by solving the linearized state equation \eqref{eq:state}, applying pointwise operations, and then solving the linearized adjoint equation. Hence, the update $\delta u$ solving the semismooth Newton step $D_N T(u^k)\delta u = - T(u^k)$ can be computed by a matrix-free Krylov method. 
To account for the local convergence of Newton methods, we embed the semismooth Newton method within a homotopy strategy for $\gamma$,
where we start with a large $\gamma$ which is successively reduced, taking the previous solution as starting point.
Furthermore, we include a backtracking line search based on the residual norm $\norm{T(u^{k+1})}$ to improve robustness.
Our Python implementation of this approach, which was used to generate the results below, can be downloaded from \url{https://www.github.com/clason/dvhpenalty}.

\section{Numerical examples}\label{sec:examples}

To illustrate the performance of the proposed approach, we compare the effects of the volumetric dose penalty with the corresponding state constraints for a simple test problem. For the sake of illustration, we consider in this section the partial differential equation
\begin{equation}\label{eq:state}
    \calE(y,u) = y_t - c\Delta y - E_{\omega_C}u
\end{equation}
for some $c>0$ together with initial conditions $y(0)=y_0\in L^2(\Omega)$ and homogeneous Dirichlet boundary conditions, where $E_{\omega_C}:V:=L^2(0,T;L^2(\omega_C))\to L^2(0,T;L^2(\Omega))$ denotes the extension by zero operator and $\omega_C\subset \Omega$ is the bounded control region. 
Let $\Omega=[-1,1]$, $T=1$, and $c=0.01$. 
We choose the target, risk and control regions as $\omega_T:=[-0.45,0.45]\setminus[-0.2,0.2] $, $\omega_R:=[-0.7,-0.55] \cup [0.55,0.7] \cup [-0.2,0.2]$, and $\omega_C=\Omega$, respectively.
We further let $U=0.5$, $L=0.2$, $U_{\min}=0$, and $U_{\max}=2$. 
Finally, we set $\alpha = 0$ and accordingly do not require a target $z$. To illustrate the influence of the dose penalty parameters $\beta_1$ (on the target region) and $\beta_2$ (on the risk region), we set $\beta_1=\tilde \beta_1 |\omega_T|^{-1}$ and $\beta_2 = \tilde \beta_2 |\omega_R|^{-1}$ for $\tilde \beta_1,\tilde \beta_2 \in\{10^5,10^6\}$, where $|\omega_T|=0.5$ and $|\omega_R|=0.7$ denote the Lebesgue measure of the target and risk region, respectively; this scaling ensures that if $\tilde \beta_1=\tilde \beta_2$, both objectives are given equal weight.
We also solve \eqref{eq:boxproblem} with identical parameters (where applicable) by solving a sequence of Moreau--Yosida-regularized problems (which coincide with a quadratic penalization of the state constraints with penalty parameter $\gamma^{-1}$) via a semismooth Newton method as outlined in \cite{ItoKun03}.

In the following, a spatial discretization with $256$ nodes and $256$ time steps are used.
In order to compute for each $\gamma$ a minimizer of the Moreau--Yosida regularization of \eqref{eq:problem} and \eqref{eq:boxproblem}, we use a maximum of $100$ semismooth Newton iterations; each Newton step is computed using GMRES with a maximum of $3000$ iterations.
We initialize $\gamma$ as $\gamma_0:=\max\{\beta_1,\beta_2\}$ for \eqref{eq:problem} and as $\gamma_0:=1$ for \eqref{eq:boxproblem}, and in both cases reduce $\gamma$ by a factor of $2$ as long as the Newton method converges until $\gamma$ reaches $10^{-10}\gamma_0$ for solving \eqref{eq:problem} and $10^{-7}\gamma_0$ for solving \eqref{eq:boxproblem}, respectively.
The convergence criterion used for the Newton iterations is a reduction to below $10^{-6}$ of the norm of the optimality system.
The results for solving \eqref{eq:boxproblem} and \eqref{eq:problem} are given in \cref{fig:boxdose} and \crefrange{fig:dose55}{fig:dose66}, respectively, for the last value of $\gamma$ (noted below) for which the semismooth Newton method converged.  In each case, the dose volume histogram shows the fraction of the area of the regions $\omega_R$ and $\omega_T$ where the dose $C_{\omega_R}y$ and $C_{\omega_T}y$ is \emph{at least} that level (i.e., the objective is to minimize the area of the shaded regions between the dotted lines and the curves).

\definecolor{mycolor1}{rgb}{0.00000,0.44700,0.74100}%
\definecolor{mycolor2}{rgb}{0.85000,0.32500,0.09800}%
\definecolor{mycolor3}{rgb}{0.92900,0.69400,0.12500}%
\definecolor{mycolor4}{rgb}{0.49400,0.18400,0.55600}%
\definecolor{mycolor5}{rgb}{0.46600,0.67400,0.18800}%
\definecolor{mycolor6}{rgb}{0.30100,0.74500,0.93300}%
\definecolor{mycolor7}{rgb}{0.63500,0.07800,0.18400}%
\begin{figure}[t]
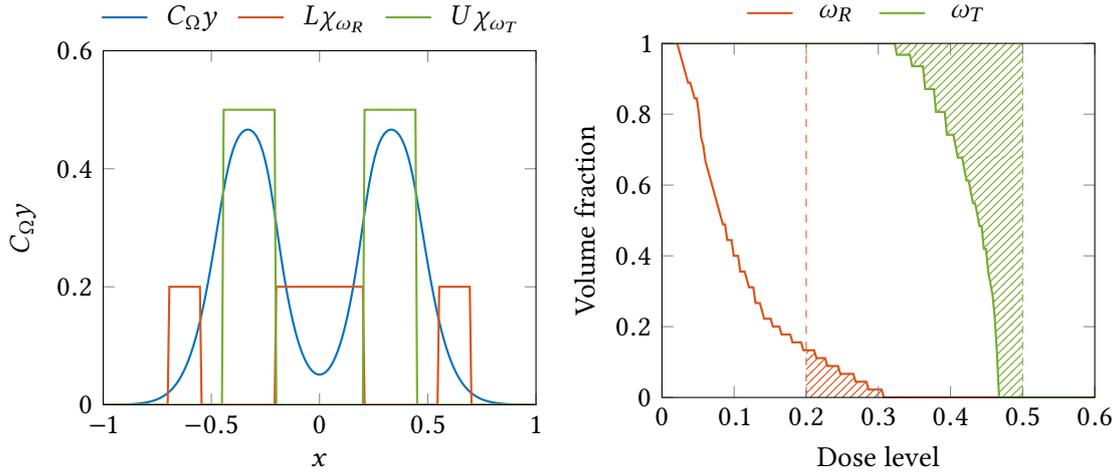

    \centering
    \begin{subfigure}[t]{0.495\textwidth}
        \input{Dose_constraint.tex}
        \caption{Final dose $C_\Omega y$,  risk level $L\chi_{\omega_R}$, target level $U\chi_{\omega_T}$}
    \end{subfigure}
    \hfill
    \begin{subfigure}[t]{0.495\textwidth}
        \input{DVH_constraint.tex}
        \caption{Dose volume histogram for risk region $\omega_R$ and target region $\omega_T$}
    \end{subfigure}
    \caption{Dose information for dose-constrained problem \eqref{eq:boxproblem}}
    \label{fig:boxdose}
\end{figure}
\begin{figure}[p]
    \centering
    \begin{subfigure}[t]{0.495\textwidth}
        \input{Dose_55.tex}
        \caption{Final dose $C_\Omega y$,  risk level $L\chi_{\omega_R}$, target level $U\chi_{\omega_T}$}
    \end{subfigure}
    \hfill
    \begin{subfigure}[t]{0.495\textwidth}
        \input{DVH_55.tex}
        \caption{Dose volume histogram for risk region $\omega_R$ and target region $\omega_T$}
    \end{subfigure}
    \caption{Dose information for dose-penalized problem \eqref{eq:problem} with $\tilde\beta_1=10^5$, $\tilde\beta_2=10^5$}
    \label{fig:dose55}
\end{figure}
\begin{figure}[p]
    \centering
    \begin{subfigure}[t]{0.495\textwidth}
        \input{Dose_65.tex}
        \caption{Final dose $C_\Omega y$,  risk level $L\chi_{\omega_R}$, target level $U\chi_{\omega_T}$}
    \end{subfigure}
    \hfill
    \begin{subfigure}[t]{0.495\textwidth}
        \input{DVH_65.tex}
        \caption{Dose volume histogram for risk region $\omega_R$ and target region $\omega_T$}
    \end{subfigure}
    \caption{Dose information for dose-penalized problem \eqref{eq:problem} with $\tilde\beta_1=10^6$, $\tilde\beta_2=10^5$}
    \label{fig:dose65}
\end{figure}
\begin{figure}[p]
    \centering
    \begin{subfigure}[t]{0.495\textwidth}
        \input{Dose_56.tex}
        \caption{Final dose $C_\Omega y$,  risk level $L\chi_{\omega_R}$, target level $U\chi_{\omega_T}$}
    \end{subfigure}
    \hfill
    \begin{subfigure}[t]{0.495\textwidth}
        \input{DVH_56.tex}
        \caption{Dose volume histogram for risk region $\omega_R$ and target region $\omega_T$}
    \end{subfigure}
    \caption{Dose information for dose-penalized problem \eqref{eq:problem} with $\tilde\beta_1=10^5$, $\tilde\beta_2=10^6$}
    \label{fig:dose56}
\end{figure}
\begin{figure}[p]
    \centering
    \begin{subfigure}[t]{0.495\textwidth}
        \input{Dose_66.tex}
        \caption{Final dose $C_\Omega y$,  risk level $L\chi_{\omega_R}$, target level $U\chi_{\omega_T}$}
    \end{subfigure}
    \hfill
    \begin{subfigure}[t]{0.495\textwidth}
        \input{DVH_66.tex}
        \caption{Dose volume histogram for risk region $\omega_R$ and target region $\omega_T$}
    \end{subfigure}
    \caption{Dose information for dose-penalized problem \eqref{eq:problem} with $\tilde\beta_1=10^6$, $\tilde\beta_2=10^6$}
    \label{fig:dose66}
\end{figure}

We first note that the solution of the regularized state-constrained problem, shown in \cref{fig:boxdose} for the final $\gamma\approx 1.22\cdot 10^{-4}$, gives poor results.
This is not unexpected: the problem \eqref{eq:boxproblem} is clearly infeasible; we see that for all $\gamma$, we have $C_{\omega_T}y<U$ everywhere while $C_{\omega_R}y>L$ on around $13\%$ of the risk region.
This means that one primary design objective---exceeding the minimal dose $U$ on $\omega_T$---is not achieved at all.

Meanwhile, the solution to the regularized dose-penalized problem for $\tilde\beta_1=\tilde \beta_2 = 10^5$ and final $\gamma/\gamma_0\approx 4.66\cdot 10^{-10}$ shown in \cref{fig:dose55} is clearly superior: a significant portion ($45\%$) of the target region $\omega_T$ has at least a dose of $U$, while the area where $C_{\omega_R}y>L$ is slightly smaller ($11\%$). Increasing $\tilde\beta_1$ to $10^6$ (see \cref{fig:dose65}, with final $\gamma/\gamma_0 \approx 1.16\cdot10^{-10}$) further improves the dose coverage on the target ($81\%$), but does so at the expense of increased violation of the dose constraint on the risk region ($22\%$ instead of $11\%$). Conversely, increasing $\tilde \beta_2$ to $10^6$  while keeping $\tilde \beta_1$ at $10^5$ (see \cref{fig:dose56}, with final $\gamma/\gamma_0\approx 1.16\cdot 10^{-10}$) reduces the dose violation on the risk region to $2\%$, but the coverage on the target is now only $42\%$. Finally, increasing both $\tilde\beta_1$ and $\tilde \beta_2$ to $10^6$ (see \cref{fig:dose66}, with final $\gamma/\gamma_0\approx 4.66\cdot 10^{-10}$) yields a dose coverage on the target of $84\%$, while the dose violation on the risk region is still only $11\%$.
Thus, in contrast to state constraints, the penalization of the dose violation is able to balance the competing objectives.

This comparison is more evident in \cref{tab:hardcaseBox} and \cref{tab:dose},
where we report for selected values of $\gamma$ the number of Newton steps needed as well as the fraction of the area of $\omega_T$ where the resulting $C_{\omega_T}y$ is below $U$ and the fraction of the area of $\omega_R$ where the resulting $C_{\omega_R}y$ is above $L$.
We see that for regularized state constraints, the regularization approach becomes significantly more difficult for even modestly small $\gamma$ while failing to give reasonable performance, which is again not surprising since the limit problem is infeasible.
In comparison, significantly fewer Newton iterations are required for the dose penalty, and the solutions to \eqref{eq:regoptsys} give better performance for each $\gamma$. 
(Here it should be pointed out that because the ratio $\beta/\gamma$ enters into the Newton system for the dose penalization, the values of $\gamma$ are not directly comparable to the case of state constraints, where $1/\gamma$ enters into the Newton system.)
It can also be seen that, at least in this configuration, putting more weight on the target region increases the difficulty of the problem significantly at the end of the homotopy loop, while putting more or equal weight on the risk region requires fewer Newton iterations for each value of $\gamma$.
We note that after $\gamma/\gamma_0 = 1.86\cdot 10^{-9}$, the volume fraction where the dose exceeds $L$ (respectively, is below $U$) on $\omega_R$ (resp. $\omega_T$) is unchanged for several iterations in the homotopy (not all of which are shown in the tables).
Finally, we remark that for $\tilde{\beta}_1=\tilde{\beta}_2 \in\{10^{7},~10^8,10^9\}$, the final volume fractions where the dose on $\omega_R$ is above $L$ is consistently $11\%$, and where the dose on $\omega_T$ is below $U$ is consistently $9.7\%$; for  $\tilde{\beta}_1=\tilde{\beta}_2 \geq 10^{10}$, the final volume fractions are consistently $17.8\%$ and $6.5\%$, respectively, which is to be expected in light of \cref{thm:opt_limit}.

\begin{table}[b]
    \caption {Results for dose-constrained problem \eqref{eq:boxproblem}: number of SSN steps, volume fraction (as percentage) for risk and target regions for different values of $\gamma$ ($*$ denotes failure to converge)}
    \label{tab:hardcaseBox}
    \centering
    \begin{tabular}{ccccccc}
        \toprule
        $\gamma/\gamma_0$                & $1.25\cdot 10^{-1}$ & $9.77\cdot 10^{-4}$ & $4.88\cdot 10^{-4}$ & $2.44\cdot 10^{-4}$ & $1.22\cdot 10^{-4}$ & $6.10 \cdot 10^{-5}$\\
        \midrule
        \#SSN                   & $1$                 & $1$                 & $1$                 & $6$                 & $24$                & $*$  \\
        \% $\omega_R$ above $L$ & $0$                 & $0$                 & $6.67$              & $11.11$              & $13.33$              & $*$ \\
        \% $\omega_T$ below $U$ & $100$                 & $100$                 & $100$                 & $100$                 & $100$                 & $*$  \\
        \bottomrule
    \end{tabular}
\end{table}
\begin{table}
    \caption {Results for dose-penalized problem \eqref{eq:problem}: number of SSN steps, volume fraction (as percentage) for risk and target regions for different values of $\gamma$ ($*$ denotes failure to converge)}\label{tab:dose}
    \begin{subtable}{\linewidth}
        \caption{$\tilde\beta_1=10^5$, $\tilde\beta_2=10^5$}\label{tab:dose:55}
        \centering
        \begin{tabular}{ccccccc}
            \toprule 
            $\gamma/\gamma_0$                & $1.56\cdot 10^{-2}$ & $1.22\cdot 10^{-4}$ & $1.91\cdot 10^{-6}$ & $1.49\cdot 10^{-8}$ & $1.86\cdot 10^{-9}$ & $1.16 \cdot 10^{-10}$\\
            \midrule
            \#SSN                   & $1$              & $1$              & $3$              & $3$              & $4$              & $*$\\
            \% $\omega_R$ above $L$ & $0$           & $0$           & $13.33$           & $13.33$           & $11.11$              & $*$ \\
            \% $\omega_T$ below $U$ & $100$           & $100$           & $100$           & $54.84$           & $54.84$              & $*$ \\
            \bottomrule
        \end{tabular}
    \end{subtable}
    \begin{subtable}{\linewidth}
        \caption{$\tilde\beta_1=10^6$, $\tilde\beta_2=10^5$}\label{tab:dose:65}
        \centering
        \begin{tabular}{ccccccc}
            \toprule 
            $\gamma/\gamma_0$                & $1.56\cdot 10^{-2}$ & $1.22\cdot 10^{-4}$ & $1.91\cdot 10^{-6}$ & $1.49\cdot 10^{-8}$ & $1.86\cdot 10^{-9}$ & $1.16 \cdot 10^{-10}$\\
            \midrule
            \#SSN                   & $1$              & $1$              & $3$              & $5$              & $7$              & $27$\\
            \% $\omega_R$ above $L$ & $0$           & $0$           & $15.56$           & $24.44$           & $24.44$           & $22.22$ \\
            \% $\omega_T$ below $U$ & $100$           & $100$           & $100$           & $29.03$           & $19.35$           & $19.35$ \\
            \bottomrule
        \end{tabular}
    \end{subtable}
    \begin{subtable}{\linewidth}
        \caption{$\tilde\beta_1=10^5$, $\tilde\beta_2=10^6$}\label{tab:dose:56}
        \centering
        \begin{tabular}{ccccccc}
            \toprule
            $\gamma/\gamma_0$                & $1.56\cdot 10^{-2}$ & $1.22\cdot 10^{-4}$ & $1.91\cdot 10^{-6}$ & $1.49\cdot 10^{-8}$ & $1.86\cdot 10^{-9}$ & $1.16 \cdot 10^{-10}$\\
            \midrule
            \#SSN                   & $1$              & $1$              & $2$              & $3$              & $5$              & $8$ \\
            \% $\omega_R$ above $L$ & $0$           & $0$           & $0$           & $4.44$              & $2.22$              & $2.22$ \\
            \% $\omega_T$ below $U$ & $100$           & $100$           & $100$           & $70.97$              & $58.06$              & $58.06$ \\
            \bottomrule
        \end{tabular}
    \end{subtable}
    \begin{subtable}{\linewidth}
        \caption{$\tilde\beta_1=10^6$, $\tilde\beta_2=10^6$}\label{tab:dose:66}
        \centering
        \begin{tabular}{ccccccc}
            \toprule
            $\gamma/\gamma_0$                & $1.56\cdot 10^{-2}$ & $1.22\cdot 10^{-4}$ & $1.91\cdot 10^{-6}$ & $1.49\cdot 10^{-8}$ & $1.86\cdot 10^{-9}$ & $1.16 \cdot 10^{-10}$\\
            \midrule
            \#SSN                   & $1$              & $1$              & $3$              & $10$              & $10$              & $*$ \\
            \% $\omega_R$ above $L$ & $0$           & $0$           & $13.33$           & $13.33$           & $11.11$           & $*$ \\
            \% $\omega_T$ below $U$ & $100$           & $100$           & $100$           & $19.35$           & $16.13$           & $*$ \\
            \bottomrule
        \end{tabular}
    \end{subtable}
\end{table}
\begin{table}[p]
    \caption{Convergence of semismooth Newton method for $\gamma=10^{-7}\gamma_0$: step length $\tau_k$ and achieved residual norm $\norm{T(u^k)}$ in each iteration $k$}
    \label{tab:dose:ssn}
    \centering
    \begin{tabular}{ccccccc}
        \toprule
        $k$ & $1$ & $2$ & $3$ & $4$ & $5$ & $6$   \\
        \midrule
        $\tau_k$ &$1.00$ & $1.00$ & $0.50$ & $0.125$ & $0.50$ & $0.50$  \\
        $\norm{T(u^k)}$ & $9.862\cdot10^{2}$ &   $9.862\cdot10^{2}$ &   $2.449\cdot10^{2}$ & $2.065\cdot10^{2}$ & $1.782\cdot10^{2}$ & $1.007\cdot10^{2}$ \\
        \midrule
        $k$ & $7$ & $8$ & $9$ & $10$ & $11$ & $12$   \\ 
        \midrule
        $\tau_k$ & $1.00$ & $1.00$ & $1.00$ & $1.00$ & $1.00$ & $1.00$  \\
        $\norm{T(u^k)}$ & $2.083\cdot10^{1}$ & $4.083\cdot10^{0}$ & $1.295\cdot10^{-1}$ & $8.216\cdot10^{-6}$ & $4.665\cdot10^{-11}$ & $1.724\cdot10^{-14}$ \\ 
        \bottomrule
    \end{tabular}
\end{table}

To illustrate the convergence behavior of the semismooth Newton method, \cref{tab:dose:ssn} shows the iteration history for $\tilde\beta_1=\tilde\beta_2=10^6$ and $\gamma=10^{-7}\gamma_0$ (without warmstarts). For each iteration $k$, the step length $\tau_k$ returned by the line search and the norm of the residual in the (regularized) optimality condition \eqref{eq:regoptsys_ref} are reported. For some initial steps, moderate damping of the semismooth Newton steps is required, leading to linear convergence. Starting from iteration $7$, full Newton steps are taken, and superlinear convergence can be observed. 

Finally, removing the central section of the risk region---so that now $\omega_R = [-0.7,-0.55] \cup [0.55,0.7]$---results in \eqref{eq:boxproblem} admitting a feasible solution. \Cref{fig:dose77} shows the dose profile and dose volume histogram for solving the regularized dose-constrained problem using $\tilde\beta_1=\tilde\beta_2=10^7$ and final $\gamma\approx 2\cdot 10^{-11}\gamma_0$.  We see that the solution satisfies the constraints in \eqref{eq:boxproblem} for sufficiently large $\beta$, as expected from \cref{thm:opt_limit_feas}.

\begin{figure}[t]
    \centering
    \begin{subfigure}[t]{0.495\textwidth}
        \input{Dose_77.tex}
        \caption{Final dose $C_\Omega y$,  risk level $L\chi_{\omega_R}$, target level $U\chi_{\omega_T}$}
    \end{subfigure}
    \hfill
    \begin{subfigure}[t]{0.495\textwidth}
        \begin{tikzpicture}

\begin{axis}[
width=\linewidth,
xlabel={Dose level},
ylabel={Volume fraction},
xmin=0, xmax=0.6,
ymin=0, ymax=1,
axis on top,
xtick distance=0.1,
legend style={legend cell align=left,align=left,draw=none,at={(0.5,1.03)},anchor=south,nodes={inner xsep=2ex}},
legend columns=4,
legend entries={$\omega_R$,$\omega_T$},
]
\addplot [name path=omR, mycolor2,thick]
table {%
0 1
0.00301507537688442 1
0.00603015075376884 1
0.00904522613065327 1
0.0120603015075377 1
0.0150753768844221 1
0.0180904522613065 0.947368421052632
0.021105527638191 0.894736842105263
0.0241206030150754 0.842105263157895
0.0271356783919598 0.789473684210526
0.0301507537688442 0.736842105263158
0.0331658291457286 0.736842105263158
0.0361809045226131 0.684210526315789
0.0391959798994975 0.684210526315789
0.0422110552763819 0.631578947368421
0.0452261306532663 0.631578947368421
0.0482412060301508 0.578947368421053
0.0512562814070352 0.578947368421053
0.0542713567839196 0.526315789473684
0.057286432160804 0.526315789473684
0.0603015075376884 0.526315789473684
0.0633165829145729 0.473684210526316
0.0663316582914573 0.473684210526316
0.0693467336683417 0.473684210526316
0.0723618090452261 0.421052631578947
0.0753768844221105 0.421052631578947
0.078391959798995 0.421052631578947
0.0814070351758794 0.368421052631579
0.0844221105527638 0.368421052631579
0.0874371859296482 0.368421052631579
0.0904522613065327 0.368421052631579
0.0934673366834171 0.315789473684211
0.0964824120603015 0.315789473684211
0.0994974874371859 0.315789473684211
0.10251256281407 0.315789473684211
0.105527638190955 0.263157894736842
0.108542713567839 0.263157894736842
0.111557788944724 0.263157894736842
0.114572864321608 0.263157894736842
0.117587939698492 0.263157894736842
0.120603015075377 0.210526315789474
0.123618090452261 0.210526315789474
0.126633165829146 0.210526315789474
0.12964824120603 0.210526315789474
0.132663316582915 0.210526315789474
0.135678391959799 0.210526315789474
0.138693467336683 0.157894736842105
0.141708542713568 0.157894736842105
0.144723618090452 0.157894736842105
0.147738693467337 0.157894736842105
0.150753768844221 0.157894736842105
0.153768844221106 0.157894736842105
0.15678391959799 0.105263157894737
0.159798994974874 0.105263157894737
0.162814070351759 0.105263157894737
0.165829145728643 0.105263157894737
0.168844221105528 0.105263157894737
0.171859296482412 0.105263157894737
0.174874371859296 0.105263157894737
0.177889447236181 0.0526315789473684
0.180904522613065 0.0526315789473684
0.18391959798995 0.0526315789473684
0.186934673366834 0.0526315789473684
0.189949748743719 0.0526315789473684
0.192964824120603 0.0526315789473684
0.195979899497487 0.0526315789473684
0.198994974874372 0.0526315789473684
0.202010050251256 0
0.205025125628141 0
0.208040201005025 0
0.21105527638191 0
0.214070351758794 0
0.217085427135678 0
0.220100502512563 0
0.223115577889447 0
0.226130653266332 0
0.229145728643216 0
0.232160804020101 0
0.235175879396985 0
0.238190954773869 0
0.241206030150754 0
0.244221105527638 0
0.247236180904523 0
0.250251256281407 0
0.253266331658291 0
0.256281407035176 0
0.25929648241206 0
0.262311557788945 0
0.265326633165829 0
0.268341708542714 0
0.271356783919598 0
0.274371859296482 0
0.277386934673367 0
0.280402010050251 0
0.283417085427136 0
0.28643216080402 0
0.289447236180905 0
0.292462311557789 0
0.295477386934673 0
0.298492462311558 0
0.301507537688442 0
0.304522613065327 0
0.307537688442211 0
0.310552763819095 0
0.31356783919598 0
0.316582914572864 0
0.319597989949749 0
0.322613065326633 0
0.325628140703518 0
0.328643216080402 0
0.331658291457286 0
0.334673366834171 0
0.337688442211055 0
0.34070351758794 0
0.343718592964824 0
0.346733668341709 0
0.349748743718593 0
0.352763819095477 0
0.355778894472362 0
0.358793969849246 0
0.361809045226131 0
0.364824120603015 0
0.3678391959799 0
0.370854271356784 0
0.373869346733668 0
0.376884422110553 0
0.379899497487437 0
0.382914572864322 0
0.385929648241206 0
0.38894472361809 0
0.391959798994975 0
0.394974874371859 0
0.397989949748744 0
0.401005025125628 0
0.404020100502513 0
0.407035175879397 0
0.410050251256281 0
0.413065326633166 0
0.41608040201005 0
0.419095477386935 0
0.422110552763819 0
0.425125628140703 0
0.428140703517588 0
0.431155778894472 0
0.434170854271357 0
0.437185929648241 0
0.440201005025126 0
0.44321608040201 0
0.446231155778894 0
0.449246231155779 0
0.452261306532663 0
0.455276381909548 0
0.458291457286432 0
0.461306532663317 0
0.464321608040201 0
0.467336683417085 0
0.47035175879397 0
0.473366834170854 0
0.476381909547739 0
0.479396984924623 0
0.482412060301508 0
0.485427135678392 0
0.488442211055276 0
0.491457286432161 0
0.494472361809045 0
0.49748743718593 0
0.500502512562814 0
0.503517587939698 0
0.506532663316583 0
0.509547738693467 0
0.512562814070352 0
0.515577889447236 0
0.518592964824121 0
0.521608040201005 0
0.524623115577889 0
0.527638190954774 0
0.530653266331658 0
0.533668341708543 0
0.536683417085427 0
0.539698492462311 0
0.542713567839196 0
0.54572864321608 0
0.548743718592965 0
0.551758793969849 0
0.554773869346734 0
0.557788944723618 0
0.560804020100502 0
0.563819095477387 0
0.566834170854271 0
0.569849246231156 0
0.57286432160804 0
0.575879396984925 0
0.578894472361809 0
0.581909547738693 0
0.584924623115578 0
0.587939698492462 0
0.590954773869347 0
0.593969849246231 0
0.596984924623116 0
0.6 0
};
\addplot[forget plot,name path=L,color=mycolor2,style=dashed] coordinates {(0.2,0) (0.2,1)};
\fill[intersection segments={of=L and omR},pattern=north east lines, pattern color=mycolor2] -- cycle;

\addplot [name path=omT,mycolor5,thick]
table {%
0 1
0.00301507537688442 1
0.00603015075376884 1
0.00904522613065327 1
0.0120603015075377 1
0.0150753768844221 1
0.0180904522613065 1
0.021105527638191 1
0.0241206030150754 1
0.0271356783919598 1
0.0301507537688442 1
0.0331658291457286 1
0.0361809045226131 1
0.0391959798994975 1
0.0422110552763819 1
0.0452261306532663 1
0.0482412060301508 1
0.0512562814070352 1
0.0542713567839196 1
0.057286432160804 1
0.0603015075376884 1
0.0633165829145729 1
0.0663316582914573 1
0.0693467336683417 1
0.0723618090452261 1
0.0753768844221105 1
0.078391959798995 1
0.0814070351758794 1
0.0844221105527638 1
0.0874371859296482 1
0.0904522613065327 1
0.0934673366834171 1
0.0964824120603015 1
0.0994974874371859 1
0.10251256281407 1
0.105527638190955 1
0.108542713567839 1
0.111557788944724 1
0.114572864321608 1
0.117587939698492 1
0.120603015075377 1
0.123618090452261 1
0.126633165829146 1
0.12964824120603 1
0.132663316582915 1
0.135678391959799 1
0.138693467336683 1
0.141708542713568 1
0.144723618090452 1
0.147738693467337 1
0.150753768844221 1
0.153768844221106 1
0.15678391959799 1
0.159798994974874 1
0.162814070351759 1
0.165829145728643 1
0.168844221105528 1
0.171859296482412 1
0.174874371859296 1
0.177889447236181 1
0.180904522613065 1
0.18391959798995 1
0.186934673366834 1
0.189949748743719 1
0.192964824120603 1
0.195979899497487 1
0.198994974874372 1
0.202010050251256 1
0.205025125628141 1
0.208040201005025 1
0.21105527638191 1
0.214070351758794 1
0.217085427135678 1
0.220100502512563 1
0.223115577889447 1
0.226130653266332 1
0.229145728643216 1
0.232160804020101 1
0.235175879396985 1
0.238190954773869 1
0.241206030150754 1
0.244221105527638 1
0.247236180904523 1
0.250251256281407 1
0.253266331658291 1
0.256281407035176 1
0.25929648241206 1
0.262311557788945 1
0.265326633165829 1
0.268341708542714 1
0.271356783919598 1
0.274371859296482 1
0.277386934673367 1
0.280402010050251 1
0.283417085427136 1
0.28643216080402 1
0.289447236180905 1
0.292462311557789 1
0.295477386934673 1
0.298492462311558 1
0.301507537688442 1
0.304522613065327 1
0.307537688442211 1
0.310552763819095 1
0.31356783919598 1
0.316582914572864 1
0.319597989949749 1
0.322613065326633 1
0.325628140703518 1
0.328643216080402 1
0.331658291457286 1
0.334673366834171 1
0.337688442211055 1
0.34070351758794 1
0.343718592964824 1
0.346733668341709 1
0.349748743718593 1
0.352763819095477 1
0.355778894472362 1
0.358793969849246 1
0.361809045226131 1
0.364824120603015 1
0.3678391959799 1
0.370854271356784 1
0.373869346733668 1
0.376884422110553 1
0.379899497487437 1
0.382914572864322 1
0.385929648241206 1
0.38894472361809 1
0.391959798994975 1
0.394974874371859 1
0.397989949748744 1
0.401005025125628 1
0.404020100502513 1
0.407035175879397 1
0.410050251256281 1
0.413065326633166 1
0.41608040201005 1
0.419095477386935 1
0.422110552763819 1
0.425125628140703 1
0.428140703517588 1
0.431155778894472 1
0.434170854271357 1
0.437185929648241 1
0.440201005025126 1
0.44321608040201 1
0.446231155778894 1
0.449246231155779 1
0.452261306532663 1
0.455276381909548 1
0.458291457286432 1
0.461306532663317 1
0.464321608040201 1
0.467336683417085 1
0.47035175879397 1
0.473366834170854 1
0.476381909547739 1
0.479396984924623 1
0.482412060301508 1
0.485427135678392 1
0.488442211055276 1
0.491457286432161 1
0.494472361809045 1
0.49748743718593 1
0.500502512562814 1
0.503517587939698 1
0.506532663316583 1
0.509547738693467 1
0.512562814070352 1
0.515577889447236 1
0.518592964824121 1
0.521608040201005 1
0.524623115577889 1
0.527638190954774 1
0.530653266331658 1
0.533668341708543 1
0.536683417085427 1
0.539698492462311 1
0.542713567839196 1
0.54572864321608 1
0.548743718592965 1
0.551758793969849 1
0.554773869346734 1
0.557788944723618 1
0.560804020100502 1
0.563819095477387 1
0.566834170854271 1
0.569849246231156 1
0.57286432160804 1
0.575879396984925 1
0.578894472361809 1
0.581909547738693 1
0.584924623115578 1
0.587939698492462 1
0.590954773869347 1
0.593969849246231 1
0.596984924623116 1
0.6 1
};
\addplot[name path=U,color=mycolor5,style=dashed] coordinates {(0.5,0) (0.5,1)};
\fill[intersection segments={of=omT and U,sequence={L1 -- R*}},pattern=north east lines, pattern color=mycolor5] -- cycle;
\end{axis}

\end{tikzpicture}
        \caption{Dose volume histogram for risk region $\omega_R$ and target region $\omega_T$}
    \end{subfigure}
    \caption{Dose information for feasible dose-penalized problem \eqref{eq:problem} with $\tilde\beta_1=10^7$, $\tilde\beta_2=10^7$}
    \label{fig:dose77}
\end{figure}

\section{Conclusions}\label{sec:concl}

Volumetric dose constraints arising in, e.g., radiotherapy treatment planning can be formulated using $L^1$ penalization. This leads to a non-differentiable optimal control problem for partial differential equations that can be analyzed and shown to be well-posed using tools from convex analysis. After introducing a Moreau--Yosida regularization, these problems can be solved efficiently by a semismooth Newton method together with a homotopy in the regularization parameter. Our numerical examples illustrate that this approach significantly outperforms formulations via pointwise state constraints, in particular with respect to the dose volume histograms commonly used to evaluate structure survival probabilities.

Natural next steps are the extension of the proposed approach to radiative transport equations---which are challenging both analytically and numerically due to their hyperbolic nature and their increased dimensionality (angular dependence)---and the application to concrete problems in radiotherapy treatment planning. 
Here we note that the analysis in \cref{sec:optcon} and \cref{sec:MYreg} only relies on the assumption that $S$ is a completely continuous affine operator between Hilbert spaces. Recent work on using realistic models for radiotherapy treatment \cite{FraHerSan10,TerKokFra16} has established the complete continuity of the relevant control-to-state operator.  
While in this case we cannot rely on the range assumption providing the norm gap needed to apply a semismooth Newton method, we point out that this can be replaced by including an additional smoothing step in the algorithm as in \cite{Ulbrich:2011}; the norm gap (and hence the range assumption) is also not required when directly considering the finite-dimensional discretized optimality conditions.
This is left for future work.

\printbibliography

\end{document}